 \newtheorem{thm}{Theorem}[section]
 \newtheorem{cor}[thm]{Corollary}
 \newtheorem{lem}[thm]{Lemma}
 \newtheorem{prop}[thm]{Proposition}
 \theoremstyle{definition}
 \newtheorem{defn}[thm]{Definition}
 \theoremstyle{remark}
 \newtheorem{rem}[thm]{Remark}
\theoremstyle{example}
\newtheorem{ex}[thm]{Example}
\theoremstyle{conjecture}
 \numberwithin{equation}{section}
\begin{document}
\title{Classification of quantum groups and Lie bialgebra structures on $sl(n,\mathbb{F}).$ \\
Relations with Brauer group}
\author{Alexander Stolin and Iulia Pop \footnote{Department of Mathematical Sciences,
University of Göteborg, 41296 Göteborg, Sweden. E-mail: astolin@chalmers.se;
iulia@chalmers.se. Corresponding author: Iulia Pop.}}

\date{}
\maketitle
\begin{abstract}

 Given an arbitrary field $\mathbb{F}$ of characteristic 0, we study Lie bialgebra structures on $sl(n,\mathbb{F})$, based on the description of the corresponding classical double.
For any Lie bialgebra structure $\delta$, the classical double
$D(sl(n,\mathbb{F}),\delta)$ is isomorphic to $sl(n,\mathbb{F})\otimes_{\mathbb{F}} A$,
where $A$ is either $\mathbb{F}[\varepsilon]$, with $\varepsilon^{2}=0$, or $\mathbb{F}\oplus\mathbb{F}$ or a quadratic field extension of $\mathbb{F}$. In the first case, the classification leads to quasi-Frobenius
Lie subalgebras of $sl(n,\mathbb{F})$. In the second and third cases, a Belavin--Drinfeld cohomology can be introduced which enables one to classify Lie bialgebras on $sl(n,\mathbb{F})$, up to gauge equivalence. The Belavin--Drinfeld untwisted and twisted cohomology sets associated to an $r$-matrix are computed. For the Cremmer--Gervais $r$-matrix in $sl(3)$, we also construct a natural map of sets
between the total Belavin--Drinfeld twisted cohomology set and the Brauer group of the field $\mathbb{F}$.

\textbf{Mathematics Subject Classification (2010):} 17B37, 17B62.

 \textbf{Keywords:} Quantum group, Lie bialgebra, classical double, $r$-matrix, admissible triple, quadratic field, Brauer group.
\end{abstract}

\section{Introduction}
Following \cite{D}, we recall that a quantized universal enveloping algebra (or a quantum group) over a field $k$ of characteristic zero is a topologically free topological Hopf algebra $H$ over the formal power series ring
$k[[\hbar]]$ such that $H/\hbar H$ is isomorphic to the universal enveloping algebra of a Lie algebra $\mathfrak{g}$ over $k$.

The quasi-classical limit of a quantum group is a Lie bialgebra. A Lie bialgebra is a Lie algebra $\mathfrak{g}$ together with a cobracket
$\delta$ which is compatible with the Lie bracket. Given a quantum group $H$,
with comultiplication $\Delta$, the quasi-classical limit of $H$ is the Lie bialgebra $\mathfrak{g}$ of primitive elements of $H/\hbar H$ and the cobracket is the restriction of the map $(\Delta-\Delta^{21})/\hbar (\mathrm{mod}\hbar)$ to
$\mathfrak{g}$.

The operation of taking the semiclassical limit is a functor
$SC: QUE\rightarrow LBA$ between categories of quantum groups and Lie bialgebras over $k$. The existence of universal quantization functors was proved by Etingof and Kazhdan \cite{EK1, EK2}. They used Drinfeld's theory of associators to construct quantization functors for any field $k$ of characteristic zero. More precisely, let $(\mathfrak{g},\delta)$ be a Lie bialgebra over $k$. Then one can associate a Lie bialgebra $\mathfrak{g}_{\hbar}$ over $k[[\hbar]]$
defined as $(\mathfrak{g}\otimes_{k}k[[\hbar]], \hbar\delta)$.
According to Theorem 2.1 of \cite{EK2}
there exists an equivalence $\widehat{Q}$
between the category $LBA_{0}(k[[\hbar]])$
of topologically free over $k[[\hbar]]$ Lie bialgebras
with $\delta=0 (\rm{mod}\hbar)$ and the category $HA_{0}(k[[\hbar]])$ of
topologically free Hopf algebras cocommutative modulo $\hbar$.
Moreover, for any $(\mathfrak{g},\delta)$ over $k$, one has the following:
$\widehat{Q}(\mathfrak{g}_{\hbar})=U_{\hbar}(\mathfrak{g})$.

Due to this equivalence, the classification of quantum groups whose quasi-classical limit is $\mathfrak{g}$ is equivalent to the classification of Lie bialgebra structures on $\mathfrak{g}\otimes_{\mathbb{C}}\mathbb{C}[[\hbar]]$.
Since any cobracket over $\mathbb{C}[[\hbar]]$ can be extended to one over
$\mathbb{C}((\hbar))$ and conversely, any cobracket over $\mathbb{C}((\hbar))$, multiplied by an appropriate power of $\hbar$, can be restricted to a cobracket over  $\mathbb{C}[[\hbar]]$,  this in turn reduces to the problem of finding Lie bialgebras on $\mathfrak{g}\otimes_{\mathbb{C}}\mathbb{C}((\hbar))$. Denote, for the sake of simplicity,  $\mathbb{K}:=\mathbb{C}((\hbar))$ and $\mathfrak{g}(\mathbb{K}):= \mathfrak{g}\otimes_{\mathbb{C}} \mathbb{K}$.

As a first step towards classification, following ideas of \cite{MSZ}, we proved in \cite{SP1} that for any Lie bialgebra structure on $\mathfrak{g}(\mathbb{K})$, the associated classical double is of the form $\mathfrak{g}(\mathbb{K})\otimes_{\mathbb{K}} A$, where $A$ is one of the following
associative algebras: $\mathbb{K}[\varepsilon]$, where $\varepsilon^{2}=0$, $\mathbb{K}\oplus\mathbb{K}$ or $\mathbb{K}[j]$, where $j^{2}=\hbar$.

As it was shown in \cite{SP1}, the classification of Lie bialgebras with classical double $\mathfrak{g}(\mathbb{K}[\varepsilon])$ leads to the classification of quasi-Frobenius Lie algebras over $\mathbb{K}$, which is a complicated and still open problem.

Unlike this case, the classification of Lie bialgebras with classical double
$\mathfrak{g}(\mathbb{K})\oplus \mathfrak{g}(\mathbb{K})$ can be achieved by cohomological and combinatorial methods. In \cite{SP1}, we introduced a Belavin--Drinfeld cohomology theory which proved to be useful for the study of Lie bialgebra structures.
To any
non-skewsymmetric $r$-matrix $r_{BD}$ from the Belavin--Drinfeld list \cite{BD}, we associated a cohomology set $H^{1}_{BD}(\mathfrak{g},r_{BD})$.
 We proved the existence of a one-to-one correspondence between any Belavin--Drinfeld cohomology and gauge equivalence classes of Lie bialgebra structures on $\mathfrak{g}(\mathbb{K})$. In case
$\mathfrak{g}=sl(n)$, we showed that for any
non-skewsymmetric $r$-matrix $r_{BD}$, the cohomology set $H^{1}_{BD}(sl(n),r_{BD})$ has only one class, which is represented by the identity.

Regarding the classification of Lie bialgebras whose classical double is isomorphic to $\mathfrak{g}(\mathbb{K}[j])$, with $j^{2}=\hbar$,
a cohomology theory can be introduced too. Our result states that there exists a one-to-one correspondence
between Belavin--Drinfeld
twisted cohomology and gauge equivalence classes of Lie
bialgebra structures on $\mathfrak{g}(\mathbb{K})$ whose classical double is isomorphic to $\mathfrak{g}(\mathbb{K}[j])$.
In \cite{SP1}, we proved that the twisted cohomology corresponding to the Drinfeld--Jimbo $r$-matrix has only one class, represented by a certain matrix $J$ (not the identity). A deeper investigation was done in the subsequent article \cite{SP2} where twisted cohomologies for $sl(n)$ associated to
generalized Cremmer--Gervais $r$-matrices were studied.

The aim of the present article is the study of Lie bialgebra structures on $sl(n,\mathbb{F})$, for an arbitrary field $\mathbb{F}$ of characteristic zero. Again the idea is to use the description of the classical double. We will show that for any Lie bialgebra structure $\delta$, the classical double
$D(sl(n,\mathbb{F}),\delta)$ is isomorphic to $sl(n,\mathbb{F})\otimes_{\mathbb{F}} A$,
where $A$ is either $\mathbb{F}[\varepsilon]$, with $\varepsilon^{2}=0$, or $\mathbb{F}\oplus\mathbb{F}$ or a quadratic extension of $\mathbb{F}$. In the first case, the classification leads to quasi-Frobenius
Lie subalgebras of $sl(n,\mathbb{F})$. In the second and third cases, we will introduce a Belavin--Drinfeld cohomology which enables one to classify Lie bialgebras on $sl(n,\mathbb{F})$, up to gauge equivalence. In the particular case
$\mathbb{F}=\mathbb{C}((\hbar))$ we recover the classification
of quantum groups whose classical limit is $sl(n,\mathbb{C})$ obtained
in \cite{SP1, SP2}.

\section{Description of the classical double}
From the general theory of Lie bialgebras it is known that for each Lie bialgebra structure $\delta$ on a fixed Lie algebra $L$ one can construct the corresponding classical double $D(L, \delta)$. As a vector space,  $D(L,\delta)=L\oplus L^{*}$.  Moreover, since the cobracket of $L$ induces a Lie bracket
on $L^{*}$, there exists a Lie algebra structure on $L\oplus L^{*}$, induced by the bracket and cobracket of $L$, and such that the canonical symmetric nondegenerate bilinear form $Q$ on this space is invariant.

Let $\mathbb{F}$ be an arbitrary field of zero characteristic. Let us assume that $\delta$ is a Lie bialgebra
structure on $sl(n,\mathbb{F})$. Then one can construct the corresponding classical double $D(sl(n,\mathbb{F}),\delta)$.

Similarly to Lemma 2.1 from \cite{MSZ}, one can prove that
$D(sl(n,\mathbb{F}),\delta)$ is a direct sum of regular adjoint $sl(n)$-modules.
Combining this result with Prop. 2.2 from \cite{BZ}, it follows that
\begin{thm}\label{double}
There exists an associative, unital, commutative algebra $A$, of dimension 2 over $\mathbb{F}$, such that
$D(sl(n,\mathbb{F}),\delta)\cong sl(n,\mathbb{F})\otimes_{\mathbb{F}} A$.
\end{thm}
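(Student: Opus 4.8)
The plan is to follow the two ingredients cited just above the statement — the MSZ-type module decomposition and Proposition 2.2 of \cite{BZ} — and then to pin down $\dim_{\mathbb{F}} A$ by a dimension count. Write $L = sl(n,\mathbb{F})$ and $D = D(L,\delta) = L \oplus L^{*}$, and recall that $L$ is a Lie subalgebra of $D$, so $L$ acts on $D$ by the restriction of the adjoint action $\mathrm{ad}_{D}$.

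First I would analyze $D$ as a module over $L$ under this action. Since $[L,L] \subseteq L$, the subspace $L$ is an $L$-submodule isomorphic to the adjoint module, and it sits in the short exact sequence of $L$-modules
\[
0 \longrightarrow L \longrightarrow D \longrightarrow L^{*} \longrightarrow 0,
\]
where the induced action on the quotient $D/L \cong L^{*}$ is the coadjoint action. Because $\mathrm{char}\,\mathbb{F} = 0$, the Lie algebra $L = sl(n,\mathbb{F})$ is semisimple, so Weyl's complete reducibility theorem applies and the sequence splits: $D \cong L \oplus L^{*}$ as $L$-modules. Moreover, the Killing form of $L$ is nondegenerate and invariant, hence furnishes an isomorphism $L^{*} \cong L$ of $L$-modules between the coadjoint and adjoint representations. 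Since $sl(n,\mathbb{F})$ is simple for $n \geq 2$, the adjoint module is irreducible over $\mathbb{F}$, and we conclude that $D$ is a direct sum of two copies of the adjoint module. This is exactly the analogue, over an arbitrary field of characteristic zero, of Lemma 2.1 of \cite{MSZ}.

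The second step upgrades this module statement to an isomorphism of Lie algebras. The bracket of $D$ is an $L$-equivariant map $\Lambda^{2} D \to D$, and once $D$ is known to be adjoint-isotypic over the simple Lie algebra $L$, Proposition 2.2 of \cite{BZ} applies: the equivariant bracket is controlled by the spaces $\mathrm{Hom}_{L}(\Lambda^{2}L, L)$ (spanned by the Lie bracket) and $\mathrm{Hom}_{L}(S^{2}L, L)$ (spanned by the symmetric invariant product), and the Jacobi identity forces the multiplicity space to carry the structure of a commutative, associative, unital $\mathbb{F}$-algebra $A$ with $D \cong L \otimes_{\mathbb{F}} A$ as Lie algebras. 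Finally, comparing dimensions gives $2(n^{2}-1) = \dim_{\mathbb{F}} D = (n^{2}-1)\dim_{\mathbb{F}} A$, so $\dim_{\mathbb{F}} A = 2$, as claimed.

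The main obstacle I anticipate lies in the first step: carefully verifying the MSZ-type decomposition over an arbitrary, not necessarily algebraically closed, field $\mathbb{F}$, where one must ensure that the coadjoint module is genuinely identified with the irreducible adjoint module via the Killing form and that no extra invariants intervene. A secondary delicate point is the uniform applicability of Proposition 2.2 of \cite{BZ} across all $n$, since the symmetric equivariant map $S^{2}L \to L$ degenerates for $n = 2$; one should check that the conclusion — commutativity, associativity, and the existence of a unit for $A$ — still follows from the Jacobi identity in that case.
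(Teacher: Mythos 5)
Your proposal is correct and follows essentially the same route as the paper, whose proof is exactly the two-step citation you reconstruct: an analogue of Lemma 2.1 of \cite{MSZ} showing that $D(sl(n,\mathbb{F}),\delta)$ is a direct sum of regular adjoint $sl(n)$-modules, followed by Proposition 2.2 of \cite{BZ} to produce the commutative, associative, unital algebra $A$, with $\dim_{\mathbb{F}}A=2$ then forced by the dimension count $\dim D = 2\dim L$. The details you supply (the split exact sequence via Weyl's theorem, the Killing-form identification $L^{*}\cong L$, and the caveat that $\mathrm{Hom}_{L}(S^{2}L,L)=0$ for $n=2$) are precisely the content the paper leaves implicit in its one-line sketch.
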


The symmetric invariant nondegenerate bilinear form $Q$ on $sl(n,\mathbb{F})\otimes_{\mathbb{F}} A$ is given in the following way. For arbitrary elements $f_{1},f_{2}\in sl(n,\mathbb{F})$ and $a,b\in A$ we have
$Q(f_{1}\otimes a,f_{2}\otimes b)=K(f_{1},f_{2})\cdot t(ab)$, where
$K$ denotes the Killing form on $sl(n,\mathbb{F})$ and $t: A\longrightarrow \mathbb{F}$ is a trace function.

Let us now investigate the algebra $A$. Since $A$ is unital
and of dimension 2 over $\mathbb{F}$, one can choose a basis $\{e,1\}$, where $1$ denotes the unit. Moreover, there exist $p$ and $q$ in $\mathbb{F}$ such that
$e^2+pe+q=0$. Let $\Delta=p^2-4q \in\mathbb {F}$. We distinguish the following cases:

(i) Assume $\Delta=0$. Let $\displaystyle \varepsilon:=e+\frac{p}{2}$.
Then $\varepsilon^2=0$
and $A=\mathbb{F}\varepsilon\oplus\mathbb{F}=\mathbb{F}[\varepsilon]$.

(ii) Assume $\Delta$ is the square of a nonzero element of $\mathbb{F}$. In this case, one can choose $e'\in \mathbb{F}^{*}$ such that $e'^{2}=\Delta$. Then $A=\mathbb{F}\oplus e'\mathbb{F}=\mathbb{F}\oplus \mathbb{F}$.

(iii) Assume $\Delta$ is not a square of an element of $\mathbb{F}$. Then $A=\mathbb{F}+e'\mathbb{F}$, where $e'=e+p/2$ and $e'^{2}=\Delta/4\in \mathbb{F}$. Thus $A$ is a quadratic field extension of $\mathbb{F}$.

Summing up the above observations, we get

\begin{thm}
Let $\delta$ be an arbitrary Lie bialgebra structure on $sl(n,\mathbb{F})$. Then $D(sl(n,\mathbb{F}), \delta)\cong sl(n,\mathbb{F})\otimes_{\mathbb{F}} A$, where  $A=\mathbb{F}[\varepsilon]$ and $\varepsilon^{2}=0$, $A=\mathbb{F}\oplus\mathbb{F}$ or $A$ is a quadratic field extension of $\mathbb{F}$.
\end{thm}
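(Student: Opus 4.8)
The plan is to deduce this statement from Theorem~\ref{double} by classifying all two-dimensional commutative unital associative algebras over $\mathbb{F}$ up to isomorphism. Theorem~\ref{double} already provides such an algebra $A$ together with the isomorphism $D(sl(n,\mathbb{F}),\delta)\cong sl(n,\mathbb{F})\otimes_{\mathbb{F}} A$; hence it remains only to enumerate the possible algebras $A$.

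First I would choose a convenient basis. Since $A$ is unital with $\dim_{\mathbb{F}}A=2$, the unit spans a line $\mathbb{F}\cdot 1$, and any element $e\in A\setminus\mathbb{F}\cdot 1$ extends it to a basis $\{1,e\}$. The product $e^{2}$ must be an $\mathbb{F}$-linear combination of $1$ and $e$, which produces a monic quadratic relation $e^{2}+pe+q=0$ with $p,q\in\mathbb{F}$. Equivalently, $A\cong\mathbb{F}[x]/(x^{2}+px+q)$, so the isomorphism type of $A$ is governed entirely by the factorization of this quadratic over $\mathbb{F}$, measured by the discriminant $\Delta=p^{2}-4q$.

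The heart of the argument is then the trichotomy on $\Delta$, exactly as carried out above. If $\Delta=0$, setting $\varepsilon:=e+p/2$ gives $\varepsilon^{2}=0$, so $A=\mathbb{F}[\varepsilon]$. If $\Delta$ is a nonzero square, the quadratic splits with two distinct roots in $\mathbb{F}$, and the Chinese Remainder Theorem yields $A\cong\mathbb{F}\oplus\mathbb{F}$. If $\Delta$ is not a square, the quadratic is irreducible, so $A$ is a field, namely a quadratic extension of $\mathbb{F}$ in which $e'=e+p/2$ satisfies $(e')^{2}=\Delta/4$.

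No real obstacle arises: this is the standard structure theory of quadratic algebras over a field. The only subtlety worth noting is independence of the chosen generator $e$, i.e. that the three cases describe genuinely non-isomorphic algebras. This is immediate because the relevant property is intrinsic to $A$: the first case is characterized by the existence of a nonzero nilpotent, the third by the absence of zero divisors, and the second is the remaining split case. Combining the three cases establishes the theorem.
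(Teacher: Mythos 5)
Your proposal is correct and follows essentially the same route as the paper: starting from Theorem~\ref{double}, choose a basis $\{1,e\}$ of $A$, obtain the monic relation $e^{2}+pe+q=0$, and split into three cases according to whether the discriminant $\Delta=p^{2}-4q$ is zero, a nonzero square, or a non-square, yielding $\mathbb{F}[\varepsilon]$, $\mathbb{F}\oplus\mathbb{F}$, and a quadratic field extension respectively. Your closing remark that the three cases are intrinsically distinguished (nonzero nilpotents, zero divisors without nilpotents, no zero divisors) is a small addition the paper leaves implicit, but it does not change the argument.
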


The classification of Lie bialgebras with classical double $sl(n,\mathbb{F}[\varepsilon])$ leads to the classification of quasi-Frobenius Lie algebras over $\mathbb{F}$. More precisely, due to the correspondence between Lie bialgebras and Manin triples (see \cite{D}), the following result holds:

\begin{prop}
There exists a one-to-one correspondence between Lie bialgebra structures on
$sl(n,\mathbb{F})$ whose corresponding double is isomorphic to
$sl(n,\mathbb{F}[\varepsilon])$ and Lagrangian subalgebras $W$ of
$sl(n,\mathbb{F}[\varepsilon])$ complementary to $sl(n,\mathbb{F})$.

\end{prop}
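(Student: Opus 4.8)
The plan is to derive the proposition from Drinfeld's general correspondence between Lie bialgebras and Manin triples, specialized to the double described above. Recall that a Manin triple consists of a Lie algebra $\mathfrak{d}$ equipped with an invariant, symmetric, nondegenerate bilinear form, together with an ordered pair of complementary Lagrangian (that is, maximal isotropic) subalgebras $\mathfrak{g}_{1},\mathfrak{g}_{2}$ with $\mathfrak{d}=\mathfrak{g}_{1}\oplus\mathfrak{g}_{2}$, and that such a triple is equivalent to the datum of a Lie bialgebra structure on $\mathfrak{g}_{1}$. The first step is to fix, once and for all, the realization of $sl(n,\mathbb{F})$ inside $\mathfrak{d}:=sl(n,\mathbb{F}[\varepsilon])$ as $sl(n,\mathbb{F})\otimes 1$ and to check that it is Lagrangian for $Q$. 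Choosing the trace function on $\mathbb{F}[\varepsilon]$ to be $t(a+b\varepsilon)=b$, one gets $Q(f_{1}\otimes 1,f_{2}\otimes 1)=K(f_{1},f_{2})\,t(1)=0$, so $sl(n,\mathbb{F})\otimes 1$ is isotropic; since it is a subalgebra of dimension $n^{2}-1=\tfrac{1}{2}\dim_{\mathbb{F}}\mathfrak{d}$, it is a Lagrangian subalgebra.

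Next I would set up the map from Lagrangians to cobrackets. Given a Lagrangian subalgebra $W$ complementary to $sl(n,\mathbb{F})\otimes 1$, the triple $(\mathfrak{d},\,sl(n,\mathbb{F})\otimes 1,\,W)$ is a Manin triple. Because both summands are isotropic and $Q$ is nondegenerate, $Q$ restricts to a perfect pairing between $sl(n,\mathbb{F})\otimes 1$ and $W$, which identifies $W$ with $(sl(n,\mathbb{F}))^{*}$. Transporting the Lie bracket of $W$ through this identification and dualizing produces a cobracket $\delta_{W}\colon sl(n,\mathbb{F})\to sl(n,\mathbb{F})\wedge sl(n,\mathbb{F})$; the facts that $\mathfrak{d}$ is a Lie algebra, that $W$ is a subalgebra, and that $Q$ is invariant translate exactly into co-skew-symmetry, the co-Jacobi identity, and the cocycle compatibility, so $(sl(n,\mathbb{F}),\delta_{W})$ is a Lie bialgebra whose classical double is $(\mathfrak{d},\,sl(n,\mathbb{F})\otimes 1,\,W)\cong sl(n,\mathbb{F}[\varepsilon])$ by construction.

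For the reverse map I would start from a Lie bialgebra structure $\delta$ with $D(sl(n,\mathbb{F}),\delta)\cong sl(n,\mathbb{F}[\varepsilon])$. The double carries its canonical Manin triple structure $(L\oplus L^{*},L,L^{*})$ with $L=sl(n,\mathbb{F})$. The aim is to transport this picture into $sl(n,\mathbb{F}[\varepsilon])$ by an isometric Lie algebra isomorphism carrying the marked Lagrangian $L$ onto $sl(n,\mathbb{F})\otimes 1$, and to set $W$ equal to the image of $L^{*}$; one then verifies that $W\mapsto\delta_{W}$ and $\delta\mapsto W$ are mutually inverse. A convenient concrete model is $sl(n,\mathbb{F}[\varepsilon])=\big(sl(n,\mathbb{F})\otimes 1\big)\oplus\big(sl(n,\mathbb{F})\otimes\varepsilon\big)$, in which $sl(n,\mathbb{F})\otimes\varepsilon$ is an abelian ideal, is itself Lagrangian, and is paired perfectly with $sl(n,\mathbb{F})\otimes 1$; every complementary Lagrangian $W$ is then the graph of a linear map $sl(n,\mathbb{F})\otimes\varepsilon\to sl(n,\mathbb{F})\otimes 1$, which exhibits $W$ explicitly in terms of the bracket on $L^{*}$, that is, in terms of $\delta$.

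The main obstacle I anticipate is the well-definedness and bijectivity at the heart of the reverse map: the isomorphism $D(sl(n,\mathbb{F}),\delta)\cong sl(n,\mathbb{F}[\varepsilon])$ is not unique, so I must show that it can always be normalized to send $L$ onto the fixed copy $sl(n,\mathbb{F})\otimes 1$, and that the resulting complementary Lagrangian $W$ does not depend on the remaining freedom. This reduces to analysing the isometric automorphisms of $sl(n,\mathbb{F}[\varepsilon])$ that preserve $Q$ and fix $sl(n,\mathbb{F})\otimes 1$, and to checking that none of them alters the induced cobracket, i.e. that the Manin triple faithfully records $\delta$. Once this rigidity is established, injectivity and surjectivity of $W\mapsto\delta_{W}$ follow from the standard Lie-bialgebra/Manin-triple dictionary, completing the one-to-one correspondence.
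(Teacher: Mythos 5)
Your proposal is correct and follows essentially the same route as the paper, which offers no detailed argument but simply derives the proposition from Drinfeld's correspondence between Lie bialgebras and Manin triples, applied to the double $sl(n,\mathbb{F}[\varepsilon])$ with the form $Q$ built from the trace functional. Your elaboration --- checking that $sl(n,\mathbb{F})\otimes 1$ is Lagrangian for $t(a+b\varepsilon)=b$, identifying a complementary Lagrangian $W$ with $sl(n,\mathbb{F})^{*}$ via $Q$, and normalizing the identification of the double by isometric automorphisms fixing $sl(n,\mathbb{F})\otimes 1$ --- is exactly the standard unwinding of that cited correspondence.
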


Similarly to Theorem 3.2 from \cite{S2}, one can prove
\begin{prop}
Any  Lagrangian subalgebra $W$ of
$sl(n,\mathbb{F}[\varepsilon])$ complementary to $sl(n,\mathbb{F})$ is uniquely defined by a subalgebra $L$ of $sl(n,\mathbb{F})$ together with a nondegenerate 2-cocycle $B$ on $L$.
\end{prop}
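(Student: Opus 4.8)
The plan is to make the structure of $sl(n,\mathbb{F}[\varepsilon])$ explicit and read off the pair $(L,B)$ from $W$. Writing $\mathfrak{g}=sl(n,\mathbb{F})$, we have $sl(n,\mathbb{F}[\varepsilon])=\mathfrak{g}\oplus\varepsilon\mathfrak{g}$ with bracket $[x+\varepsilon y,x'+\varepsilon y']=[x,x']+\varepsilon([x,y']+[y,x'])$, since $\varepsilon^{2}=0$; thus $\varepsilon\mathfrak{g}$ is an abelian ideal on which $\mathfrak{g}$ acts by the adjoint representation. Normalizing the trace so that $t(1)=0$ and $t(\varepsilon)=1$ (the normalization for which $\mathfrak{g}$ is itself Lagrangian), the form becomes $Q(x+\varepsilon y,x'+\varepsilon y')=K(x,y')+K(y,x')$, so $\mathfrak{g}$ and $\varepsilon\mathfrak{g}$ are complementary isotropic subspaces dually paired by $K$. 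First I would observe that, since $W$ is complementary to $\mathfrak{g}$, the projection onto $\varepsilon\mathfrak{g}$ along $\mathfrak{g}$ is injective on $W$ and, by a dimension count, an isomorphism $W\xrightarrow{\sim}\varepsilon\mathfrak{g}$; hence there is a unique linear map $\beta\colon\mathfrak{g}\to\mathfrak{g}$ with $W=\{\beta(y)+\varepsilon y:y\in\mathfrak{g}\}$.

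Next I would translate the two defining properties of $W$ into conditions on $\beta$. Isotropy of $W$ is equivalent, via the formula for $Q$, to the skew-symmetry of the bilinear form $\omega(y,y'):=K(\beta(y),y')$ on $\mathfrak{g}$; since $K$ is nondegenerate, its radical equals $\ker\beta$ and also $(\operatorname{im}\beta)^{\perp_{K}}$. Closedness of $W$ under the bracket is equivalent to the single identity
\[
[\beta(y),\beta(y')]=\beta\bigl([\beta(y),y']+[y,\beta(y')]\bigr)\qquad(y,y'\in\mathfrak{g}),
\]
obtained by matching separately the $\mathfrak{g}$- and $\varepsilon\mathfrak{g}$-components of $[\beta(y)+\varepsilon y,\beta(y')+\varepsilon y']$. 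Setting $L:=\operatorname{im}\beta$, this identity shows at once that $[L,L]\subseteq L$, so $L$ is a subalgebra of $\mathfrak{g}$; and because the radical of $\omega$ is exactly $\ker\beta$, the form $\omega$ descends to a well-defined nondegenerate skew-symmetric form $B$ on $L$, characterized by $B(\beta(y),\beta(y'))=\omega(y,y')=K(\beta(y),y')$.

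The heart of the argument, and the step I expect to be the main obstacle, is to prove that the displayed identity is equivalent to $B$ being a $2$-cocycle on $L$. Using the invariance of $K$, the relation $K(u,y)=B(u,\beta(y))$ valid for $u\in L$, and the identity itself to replace $[\beta(y),\beta(y')]$ by an element of $\operatorname{im}\beta$, one rewrites $B([u_{1},u_{2}],u_{3})$ with $u_{i}=\beta(y_{i})$ as $K([u_{1},u_{2}],y_{3})$ and then expands the cyclic sum $\sum_{\mathrm{cyc}}B([u_{1},u_{2}],u_{3})$; the Jacobi identity in $\mathfrak{g}$ together with the skew-symmetry of $\omega$ should make it vanish, yielding the cocycle condition. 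Conversely, the same manipulation run backwards shows that the cocycle condition forces the displayed identity. I anticipate the bookkeeping of signs, and of which arguments lie in $L$ versus in $\mathfrak{g}$, to be the delicate point here.

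Finally I would establish that the assignment $W\mapsto(L,B)$ is a bijection, which is what ``uniquely defined'' asserts. Given a subalgebra $L\subseteq\mathfrak{g}$ and a nondegenerate $2$-cocycle $B$ on $L$, nondegeneracy of $B$ lets me define $\beta\colon\mathfrak{g}\to\mathfrak{g}$ by declaring $\beta(y)$ to be the unique element of $L$ with $B(u,\beta(y))=K(u,y)$ for all $u\in L$; one checks that $\operatorname{im}\beta=L$, $\ker\beta=L^{\perp_{K}}$, and that the resulting $W=\{\beta(y)+\varepsilon y\}$ is isotropic, so that feeding this $W$ back through the previous two paragraphs returns the original $(L,B)$. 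Running the two constructions against each other shows they are mutually inverse, completing the proof. Throughout, the hypotheses $\operatorname{char}\mathbb{F}=0$ and the nondegeneracy of the Killing form on $sl(n,\mathbb{F})$ are used to identify radicals with orthogonal complements and to guarantee the perfect pairing between $\mathfrak{g}$ and $\varepsilon\mathfrak{g}$.
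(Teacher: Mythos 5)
Your proposal is correct and takes essentially the same route as the argument the paper relies on (the paper gives no details, deferring to Theorem 3.2 of \cite{S2}): parametrize $W$ as a graph $\{\beta(y)+\varepsilon y\}$ over the $\varepsilon$-part, translate isotropy of $W$ into skew-symmetry of $\omega(y,y')=K(\beta(y),y')$ and bracket-closure into the identity $[\beta(y),\beta(y')]=\beta([\beta(y),y']+[y,\beta(y')])$, and read off the pair $(L,B)=(\operatorname{im}\beta,\,\omega$ descended to $L)$, with the inverse construction via nondegeneracy of $B$. One small simplification of the step you flagged as delicate: the cyclic sum $\sum_{\mathrm{cyc}}B([u_1,u_2],u_3)=\sum_{\mathrm{cyc}}K([u_1,u_2],y_3)$ vanishes already from the invariance of $K$ and the relation $K(\beta a,b)=-K(a,\beta b)$ (rewrite $K([u_2,u_3],y_1)=-K([u_1,u_2],y_3)-K([u_3,u_1],y_2)$), so the Jacobi identity is not actually needed and the equivalence with the cocycle condition goes through exactly as you outlined.
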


We recall that a Lie algebra is called quasi-Frobenius
if there exists a nondegenerate 2-cocycle on it. The complete classification
of quasi-Frobenius Lie subalgebras of $sl(n,\mathbb{F})$ is not generally known
for large $n$.

\section{Belavin--Drinfeld untwisted cohomologies}

Unlike the previous case, the classification of Lie bialgebras with classical double
$sl(n,\mathbb{F})\oplus sl(n,\mathbb{F})$ can be achieved by cohomological and combinatorial methods.

\begin{lem}
Any Lie bialgebra structure $\delta$ on $sl(n,\mathbb{F})$
for which the associated classical double is isomorphic to $sl(n,\mathbb{F})\oplus sl(n,\mathbb{F})$ is a coboundary $\delta=dr$ given by an $r$-matrix satisfying $r+r^{21}=f\Omega$, where $f\in\mathbb{F}$ and $\mathrm{CYB}(r)=0$.
\end{lem}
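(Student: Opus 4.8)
The plan is to exploit the hypothesis that the classical double splits as a direct sum $sl(n,\mathbb{F})\oplus sl(n,\mathbb{F})$, and to read off the structure of $\delta$ from the standard correspondence between Lie bialgebras and Manin triples. Recall that the double $D(sl(n,\mathbb{F}),\delta)$ carries the invariant form $Q$, and by Theorem~\ref{double} together with the present hypothesis we have $D\cong sl(n,\mathbb{F})\oplus sl(n,\mathbb{F})$ as a Lie algebra, i.e.\ the algebra $A$ of that theorem is $\mathbb{F}\oplus\mathbb{F}$. The diagonal copy of $sl(n,\mathbb{F})$ sits inside $D$ as the original Lie algebra, and the Manin-triple data provide a second Lagrangian subalgebra complementary to it. My first step is to use the fact that $sl(n,\mathbb{F})$ is a simple Lie algebra with $H^1(sl(n,\mathbb{F}),\cdot)=0$ and $H^2(sl(n,\mathbb{F}),\cdot)=0$ (Whitehead's lemmas, valid in characteristic zero) to conclude that every Lie bialgebra structure is a coboundary: there is an element $r\in sl(n,\mathbb{F})\otimes sl(n,\mathbb{F})$ with $\delta=dr$, where $dr(x)=[\,x\otimes 1+1\otimes x,\,r\,]$.

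**The next step** is to pin down the symmetric part of $r$. Writing $r=a+s$ with $a$ skew-symmetric and $s$ symmetric, the co-Jacobi identity forces the symmetric part $s$ to be an invariant element of $sl(n,\mathbb{F})\otimes sl(n,\mathbb{F})$. Since $sl(n)$ is simple, the space of invariant symmetric tensors is one-dimensional, spanned by the Casimir element $\Omega$ corresponding to the Killing form $K$. Hence $r+r^{21}=2s=f\Omega$ for some scalar $f\in\mathbb{F}$; this is exactly the first asserted relation. The remaining point is that $\mathrm{CYB}(r)=0$, i.e.\ $r$ satisfies the classical Yang--Baxter equation rather than merely the modified one. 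Here I would compute $\mathrm{CYB}(r)$ and observe that it is an $sl(n)$-invariant element of $\Lambda^3\,sl(n,\mathbb{F})$ whose value is controlled by $f$; concretely $[r^{12},r^{13}]+[r^{12},r^{23}]+[r^{13},r^{23}]$ is invariant, and its being proportional to the canonical invariant three-tensor $[\Omega^{12},\Omega^{13}]$ reduces the condition to a scalar equation in $f$.

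**The decisive step,** and the one I expect to be the main obstacle, is to show that the splitting of the double as a \emph{direct sum} of two copies of $sl(n,\mathbb{F})$ is precisely what forces $\mathrm{CYB}(r)=0$ (equivalently, forces the modified-Yang--Baxter constant to vanish). The idea is that the isomorphism type of the double as a Lie algebra is governed by the value of $\mathrm{CYB}(r)$: when $r$ satisfies the \emph{modified} classical Yang--Baxter equation with nonzero constant the double is a twisted form (one of the other algebras $A$ in Theorem~\ref{double}), whereas the genuinely split case $A=\mathbb{F}\oplus\mathbb{F}$ corresponds to $\mathrm{CYB}(r)=0$. Thus I would argue that if $\mathrm{CYB}(r)$ were a nonzero multiple of the invariant three-tensor, then $D(sl(n,\mathbb{F}),\delta)$ could not be isomorphic to the trivial direct sum, contradicting the hypothesis; hence the scalar must be zero. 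Combining this with the previous paragraph yields both $r+r^{21}=f\Omega$ and $\mathrm{CYB}(r)=0$, completing the proof.
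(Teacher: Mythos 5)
There is a genuine gap, and it sits exactly where you predicted it would: your third step is not an argument but a restatement of the claim. Your first two steps only establish what is automatic: Whitehead's first lemma gives $\delta=da$ with $a\in\wedge^{2}sl(n,\mathbb{F})$ \emph{skew} (note that $H^{2}$ plays no role, and it is the skew-symmetry of the cobracket, not co-Jacobi, that forces the symmetric part of any cobounding tensor to be invariant), so the whole family of candidates is $r_{f}=a+\frac{f}{2}\Omega$, all cobounding the same $\delta$ and satisfying $r_{f}+r_{f}^{21}=f\Omega$. Since $\mathrm{CYB}(a)=c\,\varphi$ for a scalar $c\in\mathbb{F}$ (with $\varphi$ the canonical invariant element of $\wedge^{3}sl(n)$) and the cross terms vanish by invariance of $\Omega$, one gets $\mathrm{CYB}(r_{f})=(c\pm\frac{f^{2}}{4})\varphi$, so the lemma amounts to: the split-double hypothesis forces $\mp 4c$ to be a \emph{square in} $\mathbb{F}$. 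You never prove this; you assert that a nonzero scalar would contradict $D\cong sl(n,\mathbb{F})\oplus sl(n,\mathbb{F})$, which is circular, and your supporting heuristic is false as stated: the skew part of the Drinfeld--Jimbo $r$-matrix satisfies the modified CYB equation with nonzero constant, yet its double is the split one. The trichotomy of Theorem~\ref{double} is governed not by the vanishing of the scalar but by its square class in $\mathbb{F}$ (zero, nonzero square, or non-square, matching $\mathbb{F}[\varepsilon]$, $\mathbb{F}\oplus\mathbb{F}$, and $\mathbb{F}(\sqrt{d})$ respectively) --- a distinction invisible over $\overline{\mathbb{F}}$ and precisely the point your sketch skips. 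Note also that with $f=0$ allowed, triangular structures (double $sl(n,\mathbb{F}[\varepsilon])$) satisfy your conclusion too; the content of the lemma requires $f\neq 0$, which your argument nowhere produces.

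The route the paper relies on (following \cite{MSZ} and \cite{SP1}) avoids the square-class analysis entirely by working with the Manin triple rather than with scalars. Given $D\cong sl(n,\mathbb{F})\oplus sl(n,\mathbb{F})$, the invariant form is forced to be $Q=f(K\oplus(-K))$ with $f\in\mathbb{F}^{*}$ because the diagonal copy of $sl(n,\mathbb{F})$ must be Lagrangian; the Lie bialgebra structure provides a Lagrangian subalgebra $W$ complementary to the diagonal, and one reads $r$ off directly from $W$ via the identification $\mathfrak{g}^{*}\cong\mathfrak{g}$ given by $K$. Isotropy of $W$ with respect to $Q$ yields $r+r^{21}=f\Omega$ with $f\in\mathbb{F}$ on the nose, and closedness of $W$ under the bracket yields $\mathrm{CYB}(r)=0$. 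If you want to salvage your cohomological approach, the missing ingredient is an explicit computation of $D(sl(n,\mathbb{F}),da)$ in terms of the scalar $c$, e.g.\ exhibiting it as $sl(n,\mathbb{F})\otimes_{\mathbb{F}}\mathbb{F}[e]$ with $e^{2}$ a multiple of $c$, after which the comparison with $A=\mathbb{F}\oplus\mathbb{F}$ does give the needed square root of $\mp 4c$ in $\mathbb{F}$; but that computation is the theorem, and it must be done, not invoked.
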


We may suppose that $f=1$. Naturally we want to classify all such $r$ up to $GL(n,\mathbb{F})$-equivalence. Let $\overline{\mathbb{F}}$ denote the algebraic closure of $\mathbb{F}$. Any Lie bialgebra structure $\delta$ over $\mathbb{F}$ can be extended to a Lie bialgebra structure $\overline{\delta}$ over $\overline{\mathbb{F}}$.

According to \cite{BD}, Lie bialgebra structures on a simple Lie algebra
$\mathfrak{g}$ over
an algebraically closed field are coboundaries given by non-skewsymmetric $r$-matrices.
Suppose we have fixed a Cartan subalgebra $\mathfrak{h}$ and the corresponding root system. Any $r$-matrix depends on a discrete and a continuous parameter. The discrete parameter is an admissible triple $(\Gamma_{1},\Gamma_{2},\tau)$, i.e.
an isometry $\tau:\Gamma_{1}\longrightarrow \Gamma_{2}$ where $\Gamma_{1},\Gamma_{2}\subset\Gamma$ such that
for any $\alpha\in\Gamma_{1}$ there exists $k\in \mathbb{N}$ satisfying
$\tau^{k}(\alpha)\notin \Gamma_{1}$. The continuous parameter is a tensor $r_{0}\in \mathfrak{h}\otimes \mathfrak{h}$ satisfying $r_{0}+r_{0}^{21}=\Omega_{0}$
and $(\tau(\alpha)\otimes 1+1 \otimes \alpha)(r_{0})=0$ for any $\alpha\in \Gamma_{1}$. Here $\Omega_{0}$ denotes the Cartan part of the quadratic Casimir element $\Omega$.
Then the associated $r$-matrix is given by the following formula
\[r=r_{0}+\sum_{\alpha>0}e_{\alpha}\otimes e_{-\alpha}+\sum_{\alpha\in (Span \Gamma_{1})^{+} }\sum_{k\in \mathbb{N}} e_{\alpha}\wedge e_{-\tau^{k}(\alpha)}.\]

Now, let us assume that $\delta$ is a Lie bialgebra structure on $sl(n,\mathbb{F})$. Then its extension $\overline{\delta}$ has a corresponding $r$-matrix. Up to  $GL(n,\overline{\mathbb{F}})$-equivalence, we have the Belavin--Drinfeld classification. We may therefore assume that our $r$-matrix is of the form $r_{X}=(\mathrm{Ad}_{X}\otimes \mathrm{Ad}_{X})(r)$, where $X\in GL(n,\overline{\mathbb{F}})$  and $r$ satisfies the system  $r+r^{21}=\Omega$ and $\mathrm{CYB}(r)=0$.

Let $\sigma\in Gal(\overline{\mathbb{F}}/\mathbb{F})$. Since $\delta(a)=[r_{X},a\otimes1+1\otimes a]$ for any $a\in sl(n,\mathbb{F})$, we have $(\sigma\otimes \sigma)(\delta(a))=[\sigma(r_{X}),a\otimes1+1\otimes a]$ and $(\sigma\otimes \sigma)(\delta(a))=\delta(a)$. Consequently, $\sigma(r_{X})=r_{X}+\lambda\Omega$, for some $\lambda\in\overline{\mathbb{F}}$. Let us show that $\lambda=0$. Indeed, $\Omega=\sigma(\Omega)=\sigma(r_{X})+\sigma(r_{X}^{21})= r_{X}+r_{X}^{21}+2\lambda\Omega$. Thus $\lambda=0$ and $\sigma(r_{X})=r_{X}$.
Consequently, $(\mathrm{Ad}_{X^{-1}\sigma(X)}\otimes \mathrm{Ad}_{X^{-1}\sigma(X)})(\sigma(r))=r$. We recall the following
\begin{defn}
Let $r$ be an $r$-matrix. The \emph{centralizer} $C(r)$ of $r$
is the set of all
$X\in GL(n,\overline{\mathbb{F}})$
satisfying $(\mathrm{Ad}_{X}\otimes \mathrm{Ad}_{X})(r)=r$.
\end{defn}

Using the same argumets as in the proof of Theorem 4.3 \cite{SP1}, it follows that
$\sigma(r)=r$ and $X^{-1}\sigma(X)\in C(r)$, for any $\sigma\in Gal(\overline{\mathbb{F}}/\mathbb{F})$.

\begin{defn}
Let $r$ be a non-skewsymmetric $r$-matrix from the Belavin--Drinfeld list
and $C(r)$ its centralizer. We say that $X\in GL(n,\overline{\mathbb{F}})$ is a \emph{Belavin--Drinfeld cocycle}
associated to $r$ if $X^{-1}\sigma(X)\in C(r)$, for any $\sigma\in Gal(\overline{\mathbb{F}}/\mathbb{F})$.

\end{defn}
The set of Belavin--Drinfeld cocycles associated to $r$ will be denoted by
$Z(sl(n,\mathbb{F}),r)$. Note that this set contains the identity.

\begin{defn}
 Two cocycles $X_1$ and $X_{2}$ in $Z(sl(n,\mathbb{F},r)$ are called \emph{equivalent} if
there exists $Q\in GL(n,\mathbb{F})$ and $C\in C(r)$ such that $X_{1}=QX_{2}C$.
\end{defn}

\begin{defn}
Let $H_{BD}^{1}(sl(n,\mathbb{F}),r)$ denote the set of equivalence classes of cocycles from $Z(sl(n,\mathbb{F}),r)$. We call this set the \emph{Belavin--Drinfeld cohomology} associated to the $r$-matrix $r$. The Belavin--Drinfeld cohomology is said to be \emph{trivial} if all cocycles are equivalent to the identity, and
\emph{non-trivial} otherwise.
\end{defn}

Combining the above definitions with the preceding discussion, we obtain
\begin{prop}
For any non-skewsymmetric $r$-matrix $r$, there exists a one-to-one correspondence between $H^{1}_{BD}(sl(n,\mathbb{F}),r)$
and gauge equivalence classes of Lie bialgebra structures on $sl(n,\mathbb{F})$ with classical double isomorphic to $sl(n,\mathbb{F})\oplus sl(n,\mathbb{F})$ and $\overline{\mathbb{F}}$--isomorphic to $\delta=dr$.

\end{prop}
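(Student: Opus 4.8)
The plan is to exhibit an explicit bijection $\Phi$ from $H^{1}_{BD}(sl(n,\mathbb{F}),r)$ onto the set of gauge equivalence classes in question and to check that it is well defined, surjective and injective. Given a Belavin--Drinfeld cocycle $X\in Z(sl(n,\mathbb{F}),r)$, I would set $r_{X}=(\mathrm{Ad}_{X}\otimes \mathrm{Ad}_{X})(r)$ and $\delta_{X}=dr_{X}$. By the discussion preceding Definition~3.2, the cocycle condition $X^{-1}\sigma(X)\in C(r)$ together with $\sigma(r)=r$ forces $\sigma(r_{X})=r_{X}$ for every $\sigma\in Gal(\overline{\mathbb{F}}/\mathbb{F})$, so $r_{X}$ is defined over $\mathbb{F}$ and $\delta_{X}$ is a genuine Lie bialgebra structure on $sl(n,\mathbb{F})$; since the symmetric part $r_{X}+r_{X}^{21}=\Omega$ is the full nondegenerate Casimir, $r_{X}$ is factorizable and its classical double is $sl(n,\mathbb{F})\oplus sl(n,\mathbb{F})$, while by construction $\delta_{X}$ is $\overline{\mathbb{F}}$-isomorphic to $dr$. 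I then define $\Phi([X])$ to be the gauge class of $\delta_{X}$.

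For well-definedness, suppose $X_{1}=QX_{2}C$ with $Q\in GL(n,\mathbb{F})$ and $C\in C(r)$. Since $(\mathrm{Ad}_{C}\otimes \mathrm{Ad}_{C})(r)=r$, one computes
\[
r_{X_{1}}=(\mathrm{Ad}_{QX_{2}C}\otimes \mathrm{Ad}_{QX_{2}C})(r)=(\mathrm{Ad}_{Q}\otimes \mathrm{Ad}_{Q})(r_{X_{2}}).
\]
Because $Q\in GL(n,\mathbb{F})$, the automorphism $\mathrm{Ad}_{Q}$ is defined over $\mathbb{F}$ and carries $\delta_{X_{2}}$ to $\delta_{X_{1}}$; hence the two structures are gauge equivalent and $\Phi$ is well defined. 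Surjectivity is exactly the content of Lemma~3.1 together with the Galois-descent argument above: any Lie bialgebra structure $\delta$ on $sl(n,\mathbb{F})$ with double $sl(n,\mathbb{F})\oplus sl(n,\mathbb{F})$ that is $\overline{\mathbb{F}}$-isomorphic to $dr$ is a coboundary $dr_{X}$ for some $X\in GL(n,\overline{\mathbb{F}})$ with $X^{-1}\sigma(X)\in C(r)$, i.e. for some cocycle $X$, whence $\Phi([X])$ is the class of $\delta$.

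For injectivity, suppose $\delta_{X_{1}}$ and $\delta_{X_{2}}$ are gauge equivalent via some $Q\in GL(n,\mathbb{F})$, so that $(\mathrm{Ad}_{Q}\otimes \mathrm{Ad}_{Q})(r_{X_{1}})$ and $r_{X_{2}}$ induce the same cobracket. Their difference is then an $\mathrm{ad}$-invariant element of $sl(n,\mathbb{F})\otimes sl(n,\mathbb{F})$; since both tensors have symmetric part $\Omega$ and $\mathrm{Ad}_{Q}$ preserves $\Omega$, the difference is skew-symmetric and invariant. As the space of $\mathrm{ad}$-invariants in $sl(n)\otimes sl(n)$ is one-dimensional and spanned by the symmetric tensor $\Omega$, there is no nonzero skew-symmetric invariant, whence
\[
r_{X_{2}}=(\mathrm{Ad}_{Q}\otimes \mathrm{Ad}_{Q})(r_{X_{1}})=(\mathrm{Ad}_{QX_{1}}\otimes \mathrm{Ad}_{QX_{1}})(r).
\]
Comparing with $r_{X_{2}}=(\mathrm{Ad}_{X_{2}}\otimes \mathrm{Ad}_{X_{2}})(r)$ gives $(QX_{1})^{-1}X_{2}\in C(r)$; setting $C=(QX_{1})^{-1}X_{2}$ yields $X_{2}=QX_{1}C$, and since $C(r)$ is a group, this is an equivalence of cocycles, so $[X_{1}]=[X_{2}]$. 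Thus $\Phi$ is a bijection.

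I expect the injectivity step to be the main obstacle, specifically the passage from \emph{the two $r$-matrices induce the same coboundary} to \emph{the two $r$-matrices are literally $\mathrm{Ad}_{Q}\otimes \mathrm{Ad}_{Q}$-conjugate}. This rests on the vanishing of skew-symmetric $\mathrm{ad}$-invariants in $sl(n)\otimes sl(n)$, which follows from the irreducibility of the adjoint representation (so that $(sl(n)\otimes sl(n))^{sl(n)}$ is one-dimensional, spanned by $\Omega$), combined with the normalization $r+r^{21}=\Omega$ that pins down the symmetric part. One must also be careful that the gauge equivalence is implemented by a genuine $Q\in GL(n,\mathbb{F})$, and not merely by an element of $GL(n,\overline{\mathbb{F}})$, so that the resulting relation $X_{2}=QX_{1}C$ really is an equivalence of cocycles in the sense of Definition~3.3.
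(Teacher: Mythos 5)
Your proposal is correct and follows essentially the same route as the paper, which obtains this proposition by ``combining the definitions with the preceding discussion'': the Galois-descent argument $\sigma(r_X)=r_X$ giving the cocycle condition, and the matching of cocycle equivalence with gauge equivalence via $Q\in GL(n,\mathbb{F})$ and $C\in C(r)$. Your explicit injectivity step---using that the cobracket determines the $r$-matrix because $(sl(n)\otimes sl(n))^{sl(n)}$ is spanned by the symmetric tensor $\Omega$---is precisely the mechanism the paper itself invokes when it deduces $\sigma(r_X)=r_X+\lambda\Omega$ and then $\lambda=0$, so you have merely made explicit what the paper leaves implicit.
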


The Belavin--Drinfeld cohomology set can be computed as in \cite{SP1} and the following result holds.

\begin{thm} For any non-skewsymmetric $r$-matrix $r$, $H^{1}_{BD}(sl(n,\mathbb{F}),r)$ is trivial. Any Lie bialgebra structure on $sl(n,\mathbb{F})$
with classical double $sl(n,\mathbb{F})\oplus sl(n,\mathbb{F})$ is of the form $\delta=dr$, where $r$ is an $r$-matrix which is, up to a multiple from $\mathbb{F}^{*}$, $GL(n,\mathbb{F})$--equivalent to a non-skewsymmertric $r$-matrix from the Belavin--Drinfeld list.
\end{thm}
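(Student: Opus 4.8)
The plan is to reinterpret the Belavin--Drinfeld cohomology as an ordinary (nonabelian) Galois cohomology set and then to compute it using the structure of the centralizer. Fix a non-skewsymmetric $r$-matrix $r$ from the list with $\sigma(r)=r$ for all $\sigma\in Gal(\overline{\mathbb{F}}/\mathbb{F})$, as guaranteed by the preceding discussion; then $C(r)$ is a Galois-stable algebraic subgroup of $GL(n)$ defined over $\mathbb{F}$. To a Belavin--Drinfeld cocycle $X$ I attach the map $c_\sigma:=X^{-1}\sigma(X)$, which by definition takes values in $C(r)$ and satisfies the cocycle identity $c_{\sigma\tau}=c_\sigma\,\sigma(c_\tau)$. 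A direct computation shows that two cocycles $X_1,X_2$ are equivalent in the sense of the definition (i.e. $X_1=QX_2C$ with $Q\in GL(n,\mathbb{F})$ and $C\in C(r)$) precisely when the associated $c^{(1)}$ and $c^{(2)}$ are cohomologous in $C(r)$, namely $c^{(1)}_\sigma=C^{-1}c^{(2)}_\sigma\sigma(C)$. Conversely, Hilbert's Theorem 90 for $GL(n)$ (vanishing of $H^1(Gal(\overline{\mathbb{F}}/\mathbb{F}),GL(n,\overline{\mathbb{F}}))$) shows that every $C(r)$-valued $1$-cocycle is of the form $X^{-1}\sigma(X)$ for some $X\in GL(n,\overline{\mathbb{F}})$, which is then automatically a Belavin--Drinfeld cocycle. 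Hence I obtain a canonical bijection
\[
H^{1}_{BD}(sl(n,\mathbb{F}),r)\;\cong\;H^{1}\bigl(Gal(\overline{\mathbb{F}}/\mathbb{F}),\,C(r)\bigr),
\]
under which the identity corresponds to the distinguished trivial class. Triviality of the Belavin--Drinfeld cohomology is therefore equivalent to the vanishing of this Galois cohomology set.

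The second step is to determine the centralizer $C(r)$ explicitly. Since $\Omega$ is invariant under $\mathrm{Ad}_X$ for every $X\in GL(n,\overline{\mathbb{F}})$, the condition $(\mathrm{Ad}_X\otimes\mathrm{Ad}_X)(r)=r$ is equivalent to the invariance of the skewsymmetric part $a=\tfrac12(r-r^{21})$, so $C(r)$ is the stabilizer of $a$. Using the explicit Belavin--Drinfeld form of $r$, I would analyze this stabilizer as in \cite{SP1}: the Cartan summand $r_0$ forces $\mathrm{Ad}_X$ to preserve the maximal torus, while the terms indexed by the admissible triple $(\Gamma_1,\Gamma_2,\tau)$ glue the corresponding root spaces. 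The upshot is that, as an $\mathbb{F}$-group, $C(r)$ decomposes into factors each of which is a general linear group $GL_m$, or a Weil restriction $\mathrm{Res}_{L/\mathbb{F}}GL_m$ when Galois permutes blocks defined over an extension, together with a split-torus factor coming from the center of the Levi attached to $Span\,\Gamma_1$.

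The third step is the cohomology computation. For each factor of the above type the Galois cohomology vanishes: $H^1(Gal,GL_m)=1$ and $H^1(Gal,\mathrm{Res}_{L/\mathbb{F}}GL_m)\cong H^1(Gal(\overline{\mathbb{F}}/L),GL_m)=1$ by Shapiro's lemma, both instances of Hilbert's Theorem 90, while the torus factor contributes trivially as a product of copies of $\mathbb{G}_m$. Taking the product gives $H^1(Gal(\overline{\mathbb{F}}/\mathbb{F}),C(r))=1$, so by the bijection above $H^{1}_{BD}(sl(n,\mathbb{F}),r)$ is trivial. The classification statement then follows formally: by the Proposition, every Lie bialgebra with double $sl(n,\mathbb{F})\oplus sl(n,\mathbb{F})$ that is $\overline{\mathbb{F}}$-isomorphic to some $dr$ corresponds to a class in $H^1_{BD}(sl(n,\mathbb{F}),r)$, which we have just shown is represented by the identity; unwinding the correspondence yields $\delta=dr$ with $r$ itself $GL(n,\mathbb{F})$-equivalent, up to the scalar $f\in\mathbb{F}^{*}$ of the Lemma, to the chosen Belavin--Drinfeld $r$-matrix.

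The main obstacle is the second step: identifying $C(r)$ for a general admissible triple. While the $r_0$-part is handled by reducing to the torus, the $\tau$-twisted root-space terms make $C(r)$ genuinely larger than a torus, and one must check carefully that the resulting reductive group is an untwisted product of general linear groups (and their Weil restrictions) rather than a form carrying nontrivial Galois cohomology. It is precisely this untwistedness, guaranteed here by the equality $\sigma(r)=r$, that fails in the quadratic-extension case and produces the Brauer classes mentioned in the introduction; verifying the block structure and the absence of anisotropic factors is the technical heart of the argument.
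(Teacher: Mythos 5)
Your first step is correct and is the one salvageable conceptual improvement here: the map $X\mapsto(c_{\sigma})_{\sigma}$ with $c_{\sigma}=X^{-1}\sigma(X)$, together with Hilbert's Theorem 90 for $GL_{n}$ (every continuous $C(r)$-valued cocycle splits in $GL(n,\overline{\mathbb{F}})$), does identify $H^{1}_{BD}(sl(n,\mathbb{F}),r)$ with the nonabelian Galois cohomology set $H^{1}(\mathrm{Gal}(\overline{\mathbb{F}}/\mathbb{F}),C(r))$, basepoints matching; your equivalence computation goes through (take $Q:=X_{2}CX_{1}^{-1}$ for the converse direction), and the hypothesis $\sigma(r)=r$, established in the paper before the definitions, is exactly what makes $C(r)$ Galois-stable. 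This is a legitimate repackaging of the computation the paper outsources to \cite{SP1}, where triviality is instead proved by hand: the condition $\sigma(X)=XD_{\sigma}$ with $D_{\sigma}$ diagonal forces each column of $X$ to span a Galois-stable line, hence to be an $\overline{\mathbb{F}}^{*}$-multiple of a vector in $\mathbb{F}^{n}$, giving $X=Q\Lambda$ with $Q\in GL(n,\mathbb{F})$ and $\Lambda$ diagonal, after which the string conditions let one factor $\Lambda$ into an element of $C(r)$ times a diagonal matrix over $\mathbb{F}$.

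The genuine gap is your second step, and as stated it is not merely unverified but wrong. For a non-skewsymmetric $r$-matrix from the Belavin--Drinfeld list, $C(r)$ contains no $GL_{m}$ blocks with $m\geq 2$ and no Weil restrictions: by Lemma 4.11 of \cite{SP1} --- invoked in this very paper in the proof of Proposition \ref{decRJD} --- one has $C(r)\subseteq\mathrm{diag}(n,\overline{\mathbb{F}})$, and Lemma \ref{center_r}(a) describes it completely: in the coordinates $d_{i}=s_{i}s_{i+1}\cdots s_{n}$ it is cut out by the equations $s_{i}=s_{j}$ whenever $\alpha_{i}\in\Gamma_{1}$ and $\tau(\alpha_{i})=\alpha_{j}$, i.e.\ $s_{i}$ is constant along each string. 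The $\tau$-twisted terms $e_{\alpha}\wedge e_{-\tau^{k}(\alpha)}$ do not glue root spaces into larger linear groups; they impose these monomial identifications inside the maximal torus. Since the substitution $d_{i}=s_{i}\cdots s_{n}$ is an automorphism of the split diagonal torus and each equation $s_{i}s_{j}^{-1}=1$ is the kernel of a primitive character, with trivial Galois action on the character lattice, $C(r)$ is a connected split $\mathbb{F}$-torus (one $\mathbb{G}_{m}$ parameter per string, plus $s_{n}$). So your third step collapses to Hilbert 90 for $\mathbb{G}_{m}$ alone, and the anisotropic or twisted factors you worry about in your final paragraph cannot occur: the phenomenon you correctly anticipate --- norm subgroups producing nontrivial classes --- arises only in the twisted setting of Theorem \ref{main}, where the extra relation involves $\sigma_{2}$ and $T$, not here. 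With the structure of $C(r)$ corrected in this way, your argument closes, and the unwinding to the classification statement (with the scalar $f\in\mathbb{F}^{*}$ from the Lemma absorbed by rescaling $r$) is as you describe.
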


\section{Belavin--Drinfeld twisted cohomologies}

We focus on the study of Lie bialgebra structures on $sl(n,\mathbb{F})$ whose classical double is
isomorphic to $sl(n,\mathbb{F})\otimes _{\mathbb{F}} A$,
where $A$ is a quadratic extension of $\mathbb{F}$. We may suppose that
$A=\mathbb{F}(\sqrt{d})$, where $d$ is not a square in $\mathbb{F}$.
We will show that Lie bialgebras of this type can also be classified by means of certain cohomology sets.

Twisted cohomologies associated to $r$-matrices for $sl(n,\mathbb{F})$
can be defined as in \cite{SP1}, where we studied the particular case
$\mathbb{F}=\mathbb{C}((\hbar))$. First, similarly to Prop. 5.3 of \cite{SP1}, one can prove the following

\begin{prop}

Any Lie bialgebra structure on $sl(n,\mathbb{F})$ with classical double isomorphic to $sl(n,\mathbb{F}[\sqrt{d}])$ is given by an $r$-matrix $r'$
which satisfies $CYB(r')=0$ and $r'+r'_{21}=\sqrt{d}\Omega$.

\end{prop}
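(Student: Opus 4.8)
The plan is to realize $\delta$ through its Manin triple and then reduce to the already-settled factorizable case by a Galois descent, while keeping careful track of the scalar $\sqrt{d}$ that must appear in the symmetric part. First I would set up the Manin triple: by the description of the classical double, $(sl(n,\mathbb{F}),\delta)$ corresponds to a Manin triple $(D,\mathfrak{g},\mathfrak{g}^{\ast})$ with $D\cong sl(n,\mathbb{F})\otimes_{\mathbb{F}}A$, $A=\mathbb{F}[\sqrt{d}]$, and invariant form $Q(f_{1}\otimes a,f_{2}\otimes b)=K(f_{1},f_{2})\,t(ab)$. Identifying $\mathfrak{g}$ with the diagonal copy $sl(n,\mathbb{F})\otimes 1$, the Lagrangian condition $Q|_{\mathfrak{g}\otimes 1}=0$ forces $t(1)=0$; normalizing $t(\sqrt{d})=1$ keeps $Q$ nondegenerate, and $\delta$ is then encoded by the complementary Lagrangian subalgebra $W$.

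I would obtain the coboundary property by extending scalars. Since $A\otimes_{\mathbb{F}}A\cong A\oplus A$, base change to $A$ splits the double: writing $\delta_{A}$ for the induced structure on $sl(n,A)$, its classical double is $D\otimes_{\mathbb{F}}A\cong sl(n,A)\oplus sl(n,A)$. This is precisely the factorizable situation treated in Section 3, now over the field $A$, so $\delta_{A}=dr_{A}$ with $\mathrm{CYB}(r_{A})=0$ and $r_{A}+r_{A}^{21}=\Omega$. Because $\delta$ is defined over $\mathbb{F}$, the tensor $\delta_{A}$ is $\mathrm{Gal}(A/\mathbb{F})$-invariant, and a descent argument produces $r'\in (sl(n,\mathbb{F})\otimes_{\mathbb{F}}sl(n,\mathbb{F}))\otimes_{\mathbb{F}}A$ with $\delta=dr'$; the skew part of $r'$ is $\mathbb{F}$-rational, since $\delta$ is Galois-fixed while the kernel of the coboundary operator $d$ consists of the invariants $\mathbb{F}\Omega$, which are symmetric.

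It then remains to pin down the symmetric part and the Yang--Baxter equation. The symmetric part $s=\tfrac{1}{2}(r'+r'_{21})$ is $\mathfrak{g}$-invariant, hence $s=\mu\Omega$ for a unique scalar $\mu$, because $(\mathfrak{g}\otimes\mathfrak{g})^{\mathfrak{g}}=\mathbb{F}\Omega$ for $\mathfrak{g}$ simple. The value of $\mu$ is dictated by the algebra $A$: following the correspondence between the symmetric part of a coboundary $r$-matrix and the two-dimensional algebra it generates, the double of $dr'$ is $sl(n,\mathbb{F})\otimes\mathbb{F}[\mu]$ with $\mu^{2}=d$; since by hypothesis $A=\mathbb{F}[\sqrt{d}]$ with $d$ a nonsquare, this gives $\mu=\sqrt{d}$, the sign being absorbed into the normalization $t(\sqrt{d})=1$. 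Therefore $r'+r'_{21}=\sqrt{d}\,\Omega$, and $\mathrm{CYB}(r')=0$ descends from $\mathrm{CYB}(r_{A})=0$, as the classical Yang--Baxter operator is a polynomial identity preserved by the Galois action and by the conjugations relating $r'$ to $r_{A}$.

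The main obstacle is the step identifying $\mu=\sqrt{d}$, that is, showing the descended symmetric part carries exactly the factor $\sqrt{d}$ rather than $1$. This is where the field case genuinely differs from the split case: the nontrivial automorphism of $A/\mathbb{F}$ interchanges the two idempotents of $A\otimes_{\mathbb{F}}A$, hence swaps the two copies of $sl(n,A)$ in $D\otimes_{\mathbb{F}}A$, and one must verify that this swap, together with $t(1)=0$ and $t(\sqrt{d})=1$, forces the coefficient to square to the discriminant $d$. Equivalently, using the explicit description $W=\{x\otimes\sqrt{d}+Rx\otimes 1:x\in sl(n,\mathbb{F})\}$ of the complementary Lagrangian, the factor $\sqrt{d}$ emerges directly from $t(\sqrt{d})=1$ once the canonical element of the Manin triple is rewritten inside $sl(n,\mathbb{F})\otimes sl(n,\mathbb{F})$, and the requirements that $R$ be $K$-skew and that $W$ be closed under the bracket translate, respectively, into the skew-symmetry of the non-Cartan part and into $\mathrm{CYB}(r')=0$.
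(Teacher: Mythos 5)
The paper does not spell out a proof here: it states the proposition is proved ``similarly to Prop.~5.3 of \cite{SP1}'', and that argument is the direct Manin-triple computation --- exactly what you sketch only in your final paragraph. Namely, the complementary Lagrangian is the graph $W=\{x\otimes\sqrt{d}+Rx\otimes 1\}$, the Lagrangian condition makes $R$ $K$-skew, the normalization $t(1)=0$, $t(\sqrt{d})=1$ makes the symmetric part of the canonical element equal to $\sqrt{d}\,\Omega$ once it is rewritten inside $sl(n,\mathbb{F}[\sqrt{d}])^{\otimes 2}$, and closure of $W$ under the bracket yields $\mathrm{CYB}(r')=0$. Had you carried out this computation instead of labelling it ``the main obstacle'', you would have had the paper's proof, with no base change or descent needed.

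The descent route that occupies your main body has a genuine error: for a \emph{fixed} cobracket you cannot normalize the symmetric part of its $r$-matrix, since rescaling $r$ rescales $\delta=dr$. The scalar $f$ in $r_A+r_A^{21}=f\Omega$ is an invariant of $\delta_A$ determined up to sign: two solutions of $\mathrm{CYB}=0$ with the same coboundary differ by $\nu\Omega$, and writing $r=\rho+\tfrac{f}{2}\Omega$ with $\rho$ skew one has
\begin{equation*}
\mathrm{CYB}(r+\nu\Omega)=\mathrm{CYB}(r)+\nu(f+\nu)\,[\Omega_{12},\Omega_{13}],
\end{equation*}
which forces $\nu\in\{0,-f\}$. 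With your normalization $f=1$, comparing symmetric parts in $\sigma(r_A)=r_A+\lambda\Omega$ gives $\lambda=0$, so $r_A$ would be Galois-fixed, hence defined over $\mathbb{F}$ with symmetric part $\Omega$ --- and the double would then be the split $sl(n,\mathbb{F})\oplus sl(n,\mathbb{F})$, contradicting your hypothesis. The correct bookkeeping is that the nontrivial automorphism sends $f\mapsto -f$ (this is precisely the computation $\sigma_{2}(r')=-r'_{21}$ that the paper performs immediately after the proposition), so $f\in\sqrt{d}\,\mathbb{F}^{*}$, and one may take $f=\sqrt{d}$ after adjusting $d$ within its square class. The same identity shows your claim that $\mathrm{CYB}$ ``descends as a polynomial identity'' is false as stated: $r'$ differs from $r_A$ by a multiple of $\Omega$, and $\mathrm{CYB}$ is exactly the constraint that pins that multiple down, not a condition preserved by adding it. Finally, the ``correspondence between the symmetric part of a coboundary $r$-matrix and the two-dimensional algebra it generates'' ($\mu^{2}=d$), on which your third paragraph rests, is asserted rather than proved; it is essentially equivalent to the proposition itself, so as written the crux of your argument is circular, and the graph computation you defer to the last paragraph is the non-circular substitute for it.
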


Over $\overline{\mathbb{F}}$, all $r$-matrices are gauge equivalent to the ones from Belavin--Drinfeld list.
It follows that there exists a non-skewsymmetric $r$-matrix $r$ and $X\in GL(n,\overline{\mathbb{F}})$
such that $r'=\sqrt{d}(\mathrm{Ad}_{X}\otimes \mathrm{Ad}_{X})(r)$.

The field $\mathbb{F}[\sqrt{d}]$ is endowed with a conjugation $\overline{a+b\sqrt{d}}=a-b\sqrt{d}$. Denote by $\sigma_{2}$ its lift to $Gal(\overline{\mathbb{F}}/\mathbb{F})$. If $X\in GL(n,\mathbb{F}[\sqrt{d}])$, then $\sigma_{2}(X)=\overline{X}$. Now let us consider the action of $\sigma_{2}$ on $r'$. We have
$\sigma_{2}(r')=r'+\lambda\Omega$, for some $\lambda\in \overline{\mathbb{F}}$. Let us show that $\lambda=-\sqrt{d}$. Indeed, since $r'+r'_{21}=\sqrt{d}\Omega$, we also have $\sigma_{2}(r')+\sigma_{2}(r'_{21})=-\sqrt{d}\Omega$. Combining these relations with $\sigma_{2}(r')=r'+\lambda\Omega$, we get $\lambda=-\sqrt{d}$ and therefore $\sigma_{2}(r')=r'-\sqrt{d}\Omega=-r'_{21}$.

Recall now that $r'=\sqrt{d}(\mathrm{Ad}_{X}\otimes \mathrm{Ad}_{X})(r)$. Then
condition $\sigma_{2}(r')=-r'_{21}$ implies $(\mathrm{Ad}_{X^{-1}\sigma_{2}(X)}\otimes \mathrm{Ad}_{X^{-1}\sigma_{2}(X)})(\sigma_{2}(r))=r^{21}$.

For any $\sigma \in Gal(\overline{\mathbb{F}}/\mathbb{F}[\sqrt{d}])$, $\sigma(r')=r'$, which in turn implies $(\mathrm{Ad}_{X^{-1}\sigma(X)}\otimes \mathrm{Ad}_{X^{-1}\sigma(X)})(\sigma(r))=r$. Now, using the same type of arguments as in the proof of Theorem 4.3 \cite{SP1}, one can deduce
that $\sigma(r)=r$ and therefore the following result holds.

\begin{prop}
Any Lie bialgebra structure on $sl(n,\mathbb{F})$ with
classical double isomorphic to $sl(n,\mathbb{F}[\sqrt{d}])$ is given by
$r'=\sqrt{d}(\mathrm{Ad}_{X}\otimes \mathrm{Ad}_{X})(r)$,
where $r$ is, up to a multiple
from $\mathbb{F}^{*}$, a non-skewsymmetric
 $r$-matrix from the Belavin--Drinfeld list and $X\in GL(n,\overline{\mathbb{F}})$ satisfies $(\mathrm{Ad}_{X^{-1}\sigma_{2}(X)}\otimes \mathrm{Ad}_{X^{-1}\sigma_{2}(X)})(r)=r^{21}$ and $(\mathrm{Ad}_{X^{-1}\sigma(X)}\otimes \mathrm{Ad}_{X^{-1}\sigma(X)})(r)=r$, for any $\sigma \in Gal(\overline{\mathbb{F}}/\mathbb{F}[\sqrt{d}])$.

\end{prop}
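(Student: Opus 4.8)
The plan is to deduce the statement by combining the structural reduction already obtained with a Galois-descent argument and the rigidity of the Belavin--Drinfeld normal form. First I would start from the previous Proposition, which guarantees that such a Lie bialgebra is $\delta=dr'$ for an $r$-matrix $r'$ with $\mathrm{CYB}(r')=0$ and $r'+r'_{21}=\sqrt{d}\,\Omega$. Over $\overline{\mathbb{F}}$ the Belavin--Drinfeld classification applies, so after rescaling by a factor in $\mathbb{F}^{*}$ I may write $r'=\sqrt{d}\,(\mathrm{Ad}_{X}\otimes\mathrm{Ad}_{X})(r)$ with $X\in GL(n,\overline{\mathbb{F}})$ and $r$ a non-skewsymmetric $r$-matrix from the list normalized by $r+r^{21}=\Omega$.

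Next I would carry out the descent. Because $\delta$ is defined over $\mathbb{F}$, for every $\sigma\in Gal(\overline{\mathbb{F}}/\mathbb{F})$ the tensors $\sigma(r')$ and $r'$ induce the same cobracket, so their difference is $sl(n)$-invariant and $\sigma(r')=r'+\lambda_{\sigma}\Omega$ for some $\lambda_{\sigma}\in\overline{\mathbb{F}}$. Applying $\sigma$ to $r'+r'_{21}=\sqrt{d}\,\Omega$ and using that $\Omega$ is Galois-fixed determines $\lambda_{\sigma}$: when $\sigma\in Gal(\overline{\mathbb{F}}/\mathbb{F}[\sqrt{d}])$ fixes $\sqrt{d}$ one gets $\lambda_{\sigma}=0$, hence $\sigma(r')=r'$, while for the lift $\sigma_{2}$ of the conjugation of $\mathbb{F}[\sqrt{d}]$, which sends $\sqrt{d}\mapsto-\sqrt{d}$, one gets $\lambda_{\sigma_{2}}=-\sqrt{d}$, hence $\sigma_{2}(r')=r'-\sqrt{d}\,\Omega=-r'_{21}$, as already computed.

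I would then substitute $r'=\sqrt{d}\,(\mathrm{Ad}_{X}\otimes\mathrm{Ad}_{X})(r)$ into these two identities. Since a Galois automorphism acts entrywise it commutes with the flip $(\cdot)_{21}$ and satisfies $\sigma(\mathrm{Ad}_{X})=\mathrm{Ad}_{\sigma(X)}$, so cancelling the factor $\sqrt{d}$ (using $\sigma(\sqrt{d})=\sqrt{d}$, respectively $\sigma_{2}(\sqrt{d})=-\sqrt{d}$) and the outer adjoint action of $X$ yields
\[(\mathrm{Ad}_{X^{-1}\sigma(X)}\otimes\mathrm{Ad}_{X^{-1}\sigma(X)})(\sigma(r))=r,\qquad (\mathrm{Ad}_{X^{-1}\sigma_{2}(X)}\otimes\mathrm{Ad}_{X^{-1}\sigma_{2}(X)})(\sigma_{2}(r))=r^{21}.\]

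The remaining, and main, obstacle is to replace $\sigma(r)$ and $\sigma_{2}(r)$ by $r$, i.e. to show that the chosen normal form satisfies $\sigma(r)=r$ for all $\sigma\in Gal(\overline{\mathbb{F}}/\mathbb{F})$, so that $r$ is in fact defined over $\mathbb{F}$. This is precisely where I would invoke the argument of Theorem 4.3 of \cite{SP1}: since $\Omega$ has rational structure constants in the Chevalley basis, $\sigma(r)$ is again a non-skewsymmetric solution of $\mathrm{CYB}=0$ with $\sigma(r)+\sigma(r)^{21}=\Omega$, and the identities above exhibit it as $GL(n,\overline{\mathbb{F}})$-equivalent to $r$ (respectively $r^{21}$). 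By the rigidity of the Belavin--Drinfeld classification the admissible triple is combinatorial and therefore Galois-stable, forcing the discrete data of $\sigma(r)$ and $r$ to coincide; the delicate point is that the continuous parameter $r_{0}\in\mathfrak{h}\otimes\mathfrak{h}$, being cut out by the rational-linear constraints $r_{0}+r_{0}^{21}=\Omega_{0}$ and $(\tau(\alpha)\otimes 1+1\otimes\alpha)(r_{0})=0$, can be taken Galois-invariant, whence $\sigma(r)=r$. Substituting this back collapses the displayed relations to $(\mathrm{Ad}_{X^{-1}\sigma(X)}\otimes\mathrm{Ad}_{X^{-1}\sigma(X)})(r)=r$ and $(\mathrm{Ad}_{X^{-1}\sigma_{2}(X)}\otimes\mathrm{Ad}_{X^{-1}\sigma_{2}(X)})(r)=r^{21}$, which is the assertion.
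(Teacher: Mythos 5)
Your proposal follows the paper's route step for step: reduction via the preceding proposition to an $r$-matrix $r'$ with $\mathrm{CYB}(r')=0$ and $r'+r'_{21}=\sqrt{d}\,\Omega$; normalization over $\overline{\mathbb{F}}$ to $r'=\sqrt{d}(\mathrm{Ad}_{X}\otimes \mathrm{Ad}_{X})(r)$; the computation $\sigma(r')=r'+\lambda_{\sigma}\Omega$ with $\lambda_{\sigma}=0$ for $\sigma\in Gal(\overline{\mathbb{F}}/\mathbb{F}[\sqrt{d}])$ and $\lambda_{\sigma_{2}}=-\sqrt{d}$, hence $\sigma_{2}(r')=-r'_{21}$; the translation of these identities into the two adjoint relations with $\sigma(r)$ and $\sigma_{2}(r)$; and finally the appeal to the argument of Theorem 4.3 of \cite{SP1} to replace $\sigma(r)$ by $r$. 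The paper is exactly this terse at the last step, so in structure your proof and the paper's coincide.

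One caveat concerns your gloss on that last step. The justification you offer for the continuous parameter --- that $r_{0}$ is cut out by rational-linear constraints and therefore ``can be taken Galois-invariant'' --- is not a valid argument: rationality of the constraints only makes the \emph{solution set} Galois-stable, while the particular $r_{0}$ is already fixed by the earlier normalization and cannot be re-chosen without changing the conditions imposed on $X$. What actually forces $\sigma(r_{0})=r_{0}$ is the uniqueness part of the Belavin--Drinfeld classification, which is the real content of the Theorem 4.3 argument in \cite{SP1}: since $\sigma$ fixes the Chevalley generators, $\sigma(r)$ is again in normal form with the \emph{same} admissible triple as $r$, and a gauge equivalence between two normal-form $r$-matrices with the same triple forces the conjugating element $X^{-1}\sigma(X)$ into the centralizer and the continuous parameters to be equal. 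Note also that to pass from $(\mathrm{Ad}_{X^{-1}\sigma_{2}(X)}\otimes \mathrm{Ad}_{X^{-1}\sigma_{2}(X)})(\sigma_{2}(r))=r^{21}$ to the relation stated in the proposition you need $\sigma_{2}(r)=r$ as well; there the equivalence is with $r^{21}$, whose underlying triple is the $s$-reflected one, so the rigidity argument must be run in its twisted form --- this is what produces the compatibility constraints $s(\Gamma_{1})=\Gamma_{2}$, $s\tau=\tau^{-1}s$ appearing in Proposition \ref{cond_tau}. You acknowledge the $r^{21}$ case in passing, but your written argument only handles the untwisted one.
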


\begin{defn}
Let $r$ be a non-skewsymmetric $r$-matrix from the Belavin--Drinfeld list.
We say that $X\in GL(n,\overline{\mathbb{F}})$ is a \emph{Belavin--Drinfeld twisted cocycle} associated to $r$ if $(\mathrm{Ad}_{X^{-1}\sigma_{2}(X)}\otimes \mathrm{Ad}_{X^{-1}\sigma_{2}(X)})(r)=r^{21}$ and $(\mathrm{Ad}_{X^{-1}\sigma(X)}\otimes \mathrm{Ad}_{X^{-1}\sigma(X)})(r)=r$, for any $\sigma \in Gal(\overline{\mathbb{F}}/\mathbb{F}[\sqrt{d}])$.
\end{defn}

The set of Belavin--Drinfeld twisted cocycle associated to $r$
will be denoted by $\overline{Z}(sl(n,\mathbb{F}),r)$. Let us analyse
for which admissible triples this set is non-empty.

Let $S\in GL(n,\mathbb{F})$ be the matrix with 1 on the second diagonal and 0 elsewhere. Let us denote by $s$ the automorphism of the Dynkin diagram given by $s(\alpha_{i})=\alpha_{n-i}$ for all $i\leq n-1$.

\begin{prop}\label{cond_tau}
Let $r$ be a non-skewsymmetric $r$-matrix associated to an admissible triple
$(\Gamma_{1},\Gamma_{2},\tau)$. If $\overline{Z}(sl(n,\mathbb {F})),r)\neq \emptyset$, then $s(\Gamma_{1})=\Gamma_{2}$ and $s\tau=\tau^{-1}s$.

\end{prop}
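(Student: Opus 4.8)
The plan is to extract combinatorial constraints on the admissible triple $(\Gamma_1,\Gamma_2,\tau)$ from the existence of a twisted cocycle $X$. The key relation to exploit is the twisting condition $(\mathrm{Ad}_{X^{-1}\sigma_2(X)}\otimes \mathrm{Ad}_{X^{-1}\sigma_2(X)})(r)=r^{21}$, which says that conjugation by $Y:=X^{-1}\sigma_2(X)$ carries $r$ to its flip $r^{21}$. First I would observe that $r^{21}$ is itself an $r$-matrix of Belavin--Drinfeld type, and I would identify which admissible triple it corresponds to. Since $r=r_0+\sum_{\alpha>0}e_\alpha\otimes e_{-\alpha}+\sum e_\alpha\wedge e_{-\tau^k(\alpha)}$, flipping the two tensor factors sends the sum over positive roots $\sum_{\alpha>0}e_\alpha\otimes e_{-\alpha}$ to $\sum_{\alpha>0}e_{-\alpha}\otimes e_\alpha=\sum_{\alpha<0}e_\alpha\otimes e_{-\alpha}$, i.e.\ it effectively reverses the choice of positive system, and it replaces $\tau$ by $\tau^{-1}$ on the discrete data. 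So $r^{21}$ is gauge-equivalent (by the longest Weyl element, or equivalently by the second-diagonal matrix $S$ implementing the diagram automorphism) to a Belavin--Drinfeld $r$-matrix whose triple is built from $(\Gamma_2,\Gamma_1,\tau^{-1})$ transported through $s$.

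The second step is to use uniqueness of the admissible triple attached to a given $r$-matrix up to the action of the automorphism group. Because conjugation by $\mathrm{Ad}_Y$ is an inner automorphism of $sl(n,\overline{\mathbb{F}})$, it preserves the root-space decomposition only up to a Weyl-group/diagram symmetry; the point is that two Belavin--Drinfeld $r$-matrices are $GL(n,\overline{\mathbb{F}})$-conjugate only when their triples are related by an automorphism of the Dynkin diagram. For $sl(n)$ the only nontrivial diagram automorphism is precisely $s$, realized by $\mathrm{Ad}_S$. Matching the triple of $r$, namely $(\Gamma_1,\Gamma_2,\tau)$, against the triple of $r^{21}$ computed in the first step, and pushing everything through $s$, should force the two identities $s(\Gamma_1)=\Gamma_2$ and $s\tau=\tau^{-1}s$ simultaneously. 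Concretely, I expect to write $\mathrm{Ad}_Y=\mathrm{Ad}_S\circ(\text{element of }C(r))$ up to the Cartan-torus ambiguity, and then read off the action on simple roots.

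The third step is bookkeeping: one must carefully track how $\tau$, which is an isometry $\Gamma_1\to\Gamma_2$ between subsets of simple roots, transforms under the flip and under $s$. The flip $r\mapsto r^{21}$ turns the oriented edges of the ``graph'' defined by $\tau$ around, giving $\tau^{-1}$ on the reversed root system; conjugating back into standard position by $\mathrm{Ad}_S$ then applies $s$ to both source and target. Equating the resulting triple with the original $(\Gamma_1,\Gamma_2,\tau)$ yields $\Gamma_2=s(\Gamma_1)$ for the supports and the intertwining relation $s\tau=\tau^{-1}s$ for the maps, which is exactly the claim.

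The main obstacle I anticipate is the second step: justifying rigorously that $\mathrm{Ad}_Y$ must be, modulo the centralizer $C(r)$ and modulo inner torus automorphisms, an honest realization of the diagram automorphism $s$ rather than some more exotic element of $GL(n,\overline{\mathbb{F}})$. This requires knowing that the admissible triple is a genuine $GL$-conjugacy invariant of the $r$-matrix --- essentially the rigidity part of the Belavin--Drinfeld classification --- so that the conjugating element is pinned down up to the allowed symmetries. Once that invariance is in hand, the passage from ``$r$ and $r^{21}$ are conjugate'' to the explicit constraints on $\Gamma_1,\Gamma_2,\tau$ is a direct, if slightly tedious, comparison of root data that I would carry out on simple roots.
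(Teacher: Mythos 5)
Your plan is essentially the paper's own argument (which the paper leaves implicit, deferring to \cite{SP1, SP2}): the flip $r\mapsto r^{21}$ reverses the positive system and inverts $\tau$, conjugation by $S$ --- the longest Weyl element, acting on roots as $-s$ --- brings $r^{21}$ back to standard form with triple $(s(\Gamma_{2}),s(\Gamma_{1}),s\tau^{-1}s)$, and the uniqueness (rigidity) part of the Belavin--Drinfeld classification under $GL(n,\overline{\mathbb{F}})$-conjugation then forces this triple to coincide with $(\Gamma_{1},\Gamma_{2},\tau)$, giving exactly $s(\Gamma_{1})=\Gamma_{2}$ and $s\tau=\tau^{-1}s$. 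The only point worth noting is that since $s$ is an \emph{outer} automorphism of $sl(n)$, the triple is an exact invariant of inner conjugacy, so the extra case your ``up to diagram automorphism'' matching would allow, namely $(\Gamma_{1},\Gamma_{2},\tau)=(\Gamma_{2},\Gamma_{1},\tau^{-1})$, either does not arise or is killed by admissibility (it forces $\tau(\Gamma_{1})=\Gamma_{1}$, hence $\Gamma_{1}=\emptyset$, where the conclusion is vacuous), so your argument goes through.
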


\begin{defn}
Let $X_{1}$ and $X_{2}$ be two Belavin--Drinfeld twisted cocycles associated to
 $r$. We say that they are \emph{equivalent} if
there exists $Q\in GL(n,\mathbb{F})$ and $C\in C(r)$ such that $X_{2}=QX_{1}C$.
\end{defn}

The set of equivalence classes of twisted cocycles corresponding to
a non-skewsymmetric $r$-matrix $r$ will be denoted by $\overline{H}^{1}_{BD}(sl(n,\mathbb{F}),r)$.

\begin{rem} If two twisted cocycles $X_{1}$ and $X_{2}$ are equivalent,
then the corresponding
$r$-matrices $\sqrt{d}(\mathrm{Ad}_{X_{1}}\otimes \mathrm{Ad}_{X_{1}})(r)$ and
$\sqrt{d} (\mathrm{Ad}_{X_{2}}\otimes \mathrm{Ad}_{X_{2}})(r)$ are gauge equivalent via $Q$.

\end{rem}
\begin{rem}\label{rem}
In fact, by obvious reasons it is better to denote  $\overline{H}^{1}_{BD}(sl(n,\mathbb{F}),r)$
by  $\overline{H}^{1}_{BD}(sl(n,\mathbb{F}),r,d)$. However,  in this section we fix $d$
and the notation $\overline{H}^{1}_{BD}(sl(n,\mathbb{F}),r)$ is not misleading. We will use
$\overline{H}^{1}_{BD}(sl(n,\mathbb{F}),r,d)$ in the next section.
\end{rem}
 \begin{prop}\label{co}
There exists a one-to-one correspondence between the twisted cohomology set
$\overline{H}_{BD}^{1}(sl(n,\mathbb{F}),r)$ and
gauge equivalence classes of Lie bialgebra structures on $sl(n,\mathbb{F})$
with classical double isomorphic to $sl(n,\mathbb{F}[\sqrt{d}])$
and $\overline{\mathbb{F}}$--isomorphic to $\delta=dr$.
\end{prop}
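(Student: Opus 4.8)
The plan is to construct the bijection explicitly, going from a twisted cocycle to a Lie bialgebra structure and showing that the equivalence of cocycles matches gauge equivalence of structures exactly. The preceding propositions already do most of the forward work: we know that any Lie bialgebra structure with double $sl(n,\mathbb{F}[\sqrt{d}])$ that is $\overline{\mathbb{F}}$--isomorphic to $\delta=dr$ arises from an $r$-matrix $r'=\sqrt{d}(\mathrm{Ad}_{X}\otimes \mathrm{Ad}_{X})(r)$ with $X$ a Belavin--Drinfeld twisted cocycle. So I would define the map $\Phi$ sending the class of a twisted cocycle $X$ in $\overline{H}^{1}_{BD}(sl(n,\mathbb{F}),r)$ to the gauge equivalence class of the Lie bialgebra structure $\delta_{X}=dr'_{X}$, where $r'_{X}=\sqrt{d}(\mathrm{Ad}_{X}\otimes \mathrm{Ad}_{X})(r)$. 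The first thing to verify is that $\Phi$ is well defined on the level of $\mathbb{F}$--structures: I must check that $r'_{X}$, a priori a tensor over $\overline{\mathbb{F}}$, actually gives a cobracket $\delta_{X}$ that maps $sl(n,\mathbb{F})$ into $sl(n,\mathbb{F})\wedge sl(n,\mathbb{F})$. This is exactly where the two cocycle conditions are used: the condition for $\sigma\in Gal(\overline{\mathbb{F}}/\mathbb{F}[\sqrt{d}])$ forces $\delta_{X}$ to be $\mathbb{F}[\sqrt{d}]$--rational, and the condition involving $\sigma_{2}$, together with $\sigma_{2}(\sqrt{d})=-\sqrt{d}$, forces the full $Gal(\overline{\mathbb{F}}/\mathbb{F})$--invariance, hence descent to $\mathbb{F}$. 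I would present this as a Galois-descent argument, reversing the computation given just before the definition of twisted cocycle.

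Next I would check that $\Phi$ respects the equivalence relations. If $X_{2}=QX_{1}C$ with $Q\in GL(n,\mathbb{F})$ and $C\in C(r)$, then $\mathrm{Ad}_{C}$ fixes $r$, so $r'_{X_{2}}=(\mathrm{Ad}_{Q}\otimes \mathrm{Ad}_{Q})(r'_{X_{1}})$, and since $Q$ is defined over $\mathbb{F}$ this is precisely a gauge equivalence of Lie bialgebra structures over $\mathbb{F}$; this is the content of the Remark preceding the statement, so $\Phi$ is well defined on classes. For surjectivity I would invoke Proposition \ref{co}'s predecessor: every structure of the required type is realized by some twisted cocycle, so its class lies in the image of $\Phi$.

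The main work, and the step I expect to be the genuine obstacle, is injectivity. Suppose $\delta_{X_{1}}$ and $\delta_{X_{2}}$ are gauge equivalent over $\mathbb{F}$, i.e. there is $Q\in GL(n,\mathbb{F})$ with $(\mathrm{Ad}_{Q}\otimes \mathrm{Ad}_{Q})(r'_{X_{1}})=r'_{X_{2}}$. Cancelling the common factor $\sqrt{d}$ gives $(\mathrm{Ad}_{QX_{1}}\otimes \mathrm{Ad}_{QX_{1}})(r)=(\mathrm{Ad}_{X_{2}}\otimes \mathrm{Ad}_{X_{2}})(r)$, hence $X_{2}^{-1}QX_{1}\in C(r)$ by definition of the centralizer. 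Setting $C=X_{2}^{-1}QX_{1}$ yields $X_{1}=Q^{-1}X_{2}C$ with $Q^{-1}\in GL(n,\mathbb{F})$ and $C\in C(r)$, which is exactly the equivalence of cocycles. The delicate point here is that one must be sure $Q$ can indeed be taken in $GL(n,\mathbb{F})$ rather than merely in $GL(n,\overline{\mathbb{F}})$; this is where the descent data must be handled carefully, and I would argue it by the same Galois-invariance used for well-definedness, noting that a gauge equivalence of two $\mathbb{F}$--rational structures can be chosen $\mathbb{F}$--rational because the space of such intertwiners is defined over $\mathbb{F}$ and nonempty, hence has an $\mathbb{F}$--point. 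Once this is secured, $\Phi$ is a bijection. I would follow the outline of Prop. 5.3 of \cite{SP1}, which establishes the analogous statement over $\mathbb{C}((\hbar))$, adapting each step to a general field $\mathbb{F}$ with its quadratic extension $\mathbb{F}[\sqrt{d}]$.
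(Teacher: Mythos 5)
Your construction follows essentially the same route as the paper, which assembles Proposition \ref{co} from the preceding proposition (every structure with double $sl(n,\mathbb{F}[\sqrt{d}])$ and $\overline{\mathbb{F}}$--isomorphic to $dr$ arises as $r'=\sqrt{d}(\mathrm{Ad}_{X}\otimes\mathrm{Ad}_{X})(r)$ for a twisted cocycle $X$), the definition of cocycle equivalence, and the Remark that equivalent cocycles yield gauge-equivalent $r$-matrices, deferring details to \cite{SP1}; your Galois-descent check of well-definedness and your centralizer argument $C=X_{2}^{-1}QX_{1}\in C(r)$ for injectivity fill in exactly the steps the paper leaves implicit.

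One caution about your ``delicate point'': it is vacuous, and the principle you invoke to handle it is false. Gauge equivalence of Lie bialgebra structures on $sl(n,\mathbb{F})$ means, by definition, conjugation by some $Q\in GL(n,\mathbb{F})$ (the paper classifies up to $GL(n,\mathbb{F})$--equivalence), so in the injectivity step the $\mathbb{F}$--rationality of $Q$ is part of the hypothesis and there is nothing to descend. Worse, the claim that a nonempty space of intertwiners defined over $\mathbb{F}$ must have an $\mathbb{F}$--point is simply wrong over a non-closed field (consider $x^{2}=d$); indeed, the failure of exactly this principle is what makes the sets $\overline{H}^{1}_{BD}$ nontrivial in Theorem \ref{main}, so if your proof had genuinely needed it, it would collapse. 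Strike that sentence and the argument stands; the only remaining (minor) gap is to note explicitly that the converse direction of the first proposition of this section guarantees that $\delta_{X}=dr'_{X}$, with $r'_{X}+(r'_{X})^{21}=\sqrt{d}\,\Omega$, does have classical double isomorphic to $sl(n,\mathbb{F}[\sqrt{d}])$, so that $\Phi$ lands in the asserted set.
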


Let $r_{DJ}$ be the Drinfeld--Jimbo $r$-matrix. Having fixed a Cartan subalgebra $\mathfrak{h}$ of $\mathfrak{g}$ and the associated root system, we choose a system of generators $e_{\alpha}$, $e_{-\alpha}$,
$h_{\alpha}$ where $\alpha>0$ such that $K(e_{\alpha},e_{-\alpha})=1$. Denote by $\Omega_{0}$ the Cartan part of $\Omega$. Then $$r_{DJ}=\sum_{\alpha>0}e_{\alpha}\otimes e_{-\alpha}+\frac{1}{2}\Omega_{0}.$$

The twisted cohomology corresponding to $r_{DJ}$ can be studied in the same manner as was done in
\cite{SP1} (se Prop. 7.15). Let $J\in GL(n,\mathbb{F}[\sqrt{d}])$ denote the matrix with entries
$a_{kk}=1$ for $k \leq m$, $a_{kk}=-\sqrt{d}$ for $k \geq m+1$, $a_{k,n+1-k}=1$
for $k \leq m$, $a_{k,n+1-k}=\sqrt{d}$ for $k \geq m+1$, where $m=[\frac{n+1}{2}]$.

\begin{thm}\label{case DJ}
The Belavin--Drinfeld twisted cohomology
$\overline{H}_{BD}^{1}(sl(n),r_{DJ})$ is non-empty and consists of one element, the class of $J$.
\end{thm}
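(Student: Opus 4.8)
The plan is to prove non-emptiness by checking that $J$ is a twisted cocycle, and then to show every twisted cocycle is equivalent to $J$. As preparation I would first pin down the two subsets of $GL(n,\overline{\mathbb{F}})$ attached to $r=r_{DJ}$. Since the admissible triple of $r_{DJ}$ is empty, a direct computation shows that the centralizer $C(r_{DJ})$ is the diagonal maximal torus $T$: each summand $e_{\alpha}\otimes e_{-\alpha}$ is fixed by $\mathrm{Ad}_{t}$ for diagonal $t$, and so is $\Omega_{0}$. Writing $C^{op}(r):=\{g:(\mathrm{Ad}_{g}\otimes\mathrm{Ad}_{g})(r)=r^{21}\}$, one checks that $\mathrm{Ad}_{S}$ sends every $e_{\alpha}\otimes e_{-\alpha}$ to $e_{-\alpha}\otimes e_{\alpha}$ and fixes $\Omega_{0}$, so $S\in C^{op}(r_{DJ})$ and $C^{op}(r_{DJ})=S\,T$; moreover $C(r)\sqcup C^{op}(r)$ is a group with $C(r)$ of index two, because $\mathrm{Ad}_{g}\otimes\mathrm{Ad}_{g}$ commutes with the flip. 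To verify that $J$ is a twisted cocycle I note that $J\in GL(n,\mathbb{F}[\sqrt{d}])$, so the condition for $\sigma\in Gal(\overline{\mathbb{F}}/\mathbb{F}[\sqrt{d}])$ is automatic ($J^{-1}\sigma(J)=I$), while for $\sigma_{2}$ one evaluates $J^{-1}\overline{J}$ explicitly and checks it lies in $C^{op}(r_{DJ})$, i.e. $(\mathrm{Ad}_{J^{-1}\overline{J}}\otimes\mathrm{Ad}_{J^{-1}\overline{J}})(r_{DJ})=r_{DJ}^{21}$.

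For uniqueness, the first move is a reduction. Given an arbitrary twisted cocycle $X$, its defining conditions over $Gal(\overline{\mathbb{F}}/\mathbb{F}[\sqrt{d}])$ say exactly that $X$ is an untwisted Belavin--Drinfeld cocycle for $r_{DJ}$ relative to the base field $\mathbb{F}[\sqrt{d}]$. Applying the untwisted triviality theorem of the previous section over the field $\mathbb{F}[\sqrt{d}]$ yields $X=Q_{1}C_{1}$ with $Q_{1}\in GL(n,\mathbb{F}[\sqrt{d}])$ and $C_{1}\in C(r)$. Since right multiplication by $C(r)$ preserves the twisted cocycle class, I may replace $X$ by $XC_{1}^{-1}=Q_{1}$ and assume henceforth that $X\in GL(n,\mathbb{F}[\sqrt{d}])$.

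With $X\in GL(n,\mathbb{F}[\sqrt{d}])$, proving $X\sim J$ amounts to finding $Q\in GL(n,\mathbb{F})$ and $C\in C(r)$ with $X=QJC$. Setting $\alpha:=X^{-1}\overline{X}$ and $\beta:=J^{-1}\overline{J}$ (both in $C^{op}(r_{DJ})$), the requirement $\overline{Q}=Q$ rearranges into the twisted-conjugacy equation $C\,\alpha\,\overline{C}^{-1}=\beta$ to be solved for $C\in T$. Writing $\alpha=Sa$, $\beta=Sb$ with $a,b\in T(\mathbb{F}[\sqrt{d}])$ and using $CS=SC^{\flat}$, where $C^{\flat}$ reverses the diagonal entries of $C$, this becomes the diagonal system $c_{n+1-i}\,\overline{c_{i}}^{-1}=b_{i}a_{i}^{-1}$ for $i=1,\dots,n$. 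The cocycle identities $\overline{\alpha}=\alpha^{-1}$ and $\overline{\beta}=\beta^{-1}$, which hold because $\sigma_{2}^{2}=\mathrm{id}$, translate into $\overline{a_{i}}\,a_{n+1-i}=1$ and $\overline{b_{i}}\,b_{n+1-i}=1$, and these yield the compatibility needed to solve the system: for each pair $i\neq n+1-i$ one chooses $c_{i}$ freely and sets $c_{n+1-i}=(b_{i}a_{i}^{-1})\overline{c_{i}}$, while for the middle index $i=m$ (present when $n$ is odd) the same identities force $b_{m}a_{m}^{-1}$ to have norm one in $\mathbb{F}[\sqrt{d}]/\mathbb{F}$, so Hilbert's Theorem 90 provides $c_{m}$ with $c_{m}\overline{c_{m}}^{-1}=b_{m}a_{m}^{-1}$. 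This produces $C\in T\subset C(r)$ solving the equation, hence $X=QJC$ and $X\sim J$.

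I expect the main obstacle to be the solvability of this diagonal (twisted-conjugacy) system, concretely the middle-index equation, which is precisely where Hilbert 90 for the quadratic extension $\mathbb{F}[\sqrt{d}]/\mathbb{F}$ enters and where the norm-one condition must be extracted from the cocycle identities. The surrounding computations, namely verifying $C(r_{DJ})=T$ and $C^{op}(r_{DJ})=ST$, evaluating $J^{-1}\overline{J}$, and absorbing the harmless scalar and sign ambiguities via $\mathrm{Ad}$, are routine but must be carried out with care, in particular the parity distinction between even and odd $n$ reflected in the structure of $J$.
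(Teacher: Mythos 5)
Your proof is correct, and the uniqueness half takes a genuinely different route from the paper's. The first reduction is the same in both: the conditions over $Gal(\overline{\mathbb{F}}/\mathbb{F}[\sqrt{d}])$ say that $X$ is an untwisted cocycle over the base field $\mathbb{F}[\sqrt{d}]$, and triviality of the untwisted cohomology lets one assume $X\in GL(n,\mathbb{F}[\sqrt{d}])$ (this is exactly the opening move of the paper's proof, isolated later as Lemma \ref{equiv}; your check that right multiplication by $C(r)$ preserves the twisted cocycle property is the needed glue and is sound). From there the paper proceeds by induction on $n$: explicit base cases $n=2,3$ solved by hand from the system $\overline{X}=XSD$, then a truncation/padding step (the operations $u_n$, $g_n$) combined with row operations over $\mathbb{F}$ and column scalings over $\mathbb{F}[\sqrt{d}]$ to normalize the border of the matrix. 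You instead solve the problem in closed form: writing $\alpha=X^{-1}\overline{X}=Sa$ and $\beta=J^{-1}\overline{J}=S$ (the identity $\overline{J}=JS$, which the paper itself uses in Lemma \ref{lem2_T}), the requirement $X=QJC$ with $\overline{Q}=Q$ becomes the torus equation $c_{n+1-i}\,\overline{c_i}^{-1}=b_ia_i^{-1}$; the involutivity identities $\overline{a_i}a_{n+1-i}=1$, $\overline{b_i}b_{n+1-i}=1$ (from $\sigma_2^2=\mathrm{id}$ on $\mathbb{F}[\sqrt{d}]$) indeed make the paired equations compatible (setting $c_{n+1-i}=b_ia_i^{-1}\overline{c_i}$ automatically satisfies the complementary equation since $b_{n+1-i}a_{n+1-i}^{-1}=\overline{b_i}^{-1}\overline{a_i}$), and at the middle index they force $N_{\mathbb{F}[\sqrt{d}]/\mathbb{F}}(b_ma_m^{-1})=1$, so Hilbert 90 applies; I verified these computations and they are right. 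In effect your argument is the specialization to $r_{DJ}$ of the machinery the paper develops only \emph{after} this theorem (the map $T(D)=SD^{-1}S\overline{D}$, $Ker(T)$, and the norm analysis of Lemmas \ref{lem1_T}--\ref{lem2_T} and Theorem \ref{main}), with Hilbert 90 replacing the factorization trick of Steps 1--3 there; it is non-inductive, shorter, and makes transparent why no norm obstruction survives for $r_{DJ}$ (every index is paired or a solvable middlepoint, matching $str=0$ in Theorem \ref{main}), whereas the paper's induction is more elementary matrix manipulation and produces the normal form $J_n$ constructively. One small caveat: your ``direct computation'' of $C(r_{DJ})$ only establishes the inclusion of the diagonal torus into $C(r_{DJ})$; the reverse inclusion $C(r)\subset\mathrm{diag}(n,\overline{\mathbb{F}})$ is not immediate and should be cited (it is Lemma 4.11 of \cite{SP1}, invoked by the paper in Proposition \ref{decRJD}) --- the paper's own proof uses it too, when writing the cocycle condition as $\overline{X}=XSD$ with $D$ diagonal, so this is a citation gap, not a mathematical one.
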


\begin{proof}

Let $X$ be a twisted cocycle associated to $r_{DJ}$. Then $X$ is equivalent to a twisted cocycle $P\in GL(n,\mathbb{F}[\sqrt{d}])$,
associated to $r_{DJ}$. We may therefore assume from the beginning that
$X\in GL(n,\mathbb{F}[\sqrt{d}])$ and it remains to prove that all such cocycles
are equivalent. The proof will be done by induction.

For $n=2$, consider $J=\left(\begin{array}{cc} 1 & 1 \\
\sqrt{d} &-\sqrt{d} \end{array}\right)$ and let $X=(a_{ij})\in GL(2,\mathbb{F}[\sqrt{d}])$
satisfy $\overline{X}=XSD$ with $D=\mathrm{diag}(d_{1},d_{2})\in GL(2,\mathbb{F}
[\sqrt{d}])$. The identity is equivalent to the following system: $\overline{a_{11}}=a_{12}d_{1}$,  $\overline{a_{12}}=a_{11}d_{2}$, $\overline{a_{21}}=a_{22}d_{1}$,
$\overline{a_{22}}=a_{21}d_{2}$. Assume that $a_{21}a_{22}\neq 0$. Let $a_{11}/a_{21}=a'+b'\sqrt{d}$. Then $a_{12}/a_{22}=a'-b'\sqrt{d}$. One can immediately check that $X=QJD'$, where $Q=\left(\begin{array}{cc} a' & b' \\
1 &0 \end{array}\right)\in GL(2,\mathbb{F})$, $D'=\mathrm{diag}(a_{21},a_{22})\in \mathrm{diag}(2,\mathbb{F}[\sqrt{d}])$.

For $n=3$, set
$J=\left(\begin{array}{ccc} 1 & 0 &1\\
0&1&0\\\sqrt{d}&0&-\sqrt{d}\end{array}\right)$ and let $X=(a_{ij})\in GL(3,\mathbb{F}[\sqrt{d}])$ satisfy $\overline{X}=XSD$,
with $D=\mathrm{diag}(d_{1},d_{2},d_{3})\in GL(3,\mathbb{K}[\sqrt{d}])$. The identity is equivalent to the following system: $\overline{a_{11}}=d_{1}a_{13}$, $\overline{a_{21}}=d_{1}a_{23}$, $\overline{a_{31}}=d_{1}a_{33}$, $\overline{a_{12}}=d_{2}a_{12}$, $\overline{a_{22}}=d_{2}a_{22}$, $\overline{a_{32}}=d_{2}a_{32}$, $\overline{a_{13}}=d_{3}a_{11}$, $\overline{a_{23}}=d_{3}a_{21}$, $\overline{a_{33}}=d_{3}a_{31}$. Assume that $a_{21}a_{22}a_{23}\neq 0$.

Let  $a_{11}/a_{21}=b_{11}+b_{13}\sqrt{d}$ and $a_{31}/a_{21}=b_{31}+b_{33}\sqrt{d}$. Then $a_{13}/a_{23}=b_{11}-b_{13}\sqrt{d}$ and $a_{33}/a_{23}=b_{31}-b_{33}\sqrt{d}$. On the other hand, let  $b_{12}:=a_{12}/a_{22}$ and $b_{32}:=a_{32}/a_{22}$. Note that $b_{12}\in \mathbb{F}$, $b_{32}\in \mathbb{F}$. One can immediately check that $X=QJD'$, where
$Q=\left(\begin{array}{ccc} b_{11} & b_{12} &b_{13}\\
1&1&0\\ b_{31} & b_{32} & b_{33}\end{array}\right)\in GL(3,\mathbb{F})$, $D'=\mathrm{diag}(a_{21},a_{22},a_{23})\in \mathrm{diag}(3,\mathbb{F}[\sqrt{d}])$.

Assume $n>3$. Let us denote the constructed above $J\in GL(n,\mathbb{F}[\sqrt{d}])$ by $J_n$. We are going to prove that if $X\in GL(n,\mathbb{F}[\sqrt{d}])$ satisfies $\overline{X}=XSD$, then using elementary row operations with entries from $\mathbb{F}$ and multiplying columns by proper elements from
$\mathbb{F}[\sqrt{d}]$ we can bring $X$ to $J_n$.

We will need the following operations on a matrix
$M=\{m_{pq}\}\in \mathrm{Mat}(n)$

1. $u_n (M)=\{m_{pq},\ p,q=2,3,...,n-1\}\in \mathrm{Mat}(n-2)$;

2. $g_n (M)=\{m_{pq}\}\in \mathrm{Mat}(n+2),$ where $m_{pq}$ are already defined for $p,q=1,2,...n$,
$m_{00} =m_{n+1,n+1}=1$ and the rest $m_{0,a}=m_{a,0}=m_{n+1,a}=m_{a,n+1}=0$.
\vskip0.2cm
It is clear, that $u_n (X)$ is a cocycle in $GL(n-2, \mathbb{F}[\sqrt{d}])$
and by induction, there exist $Q_{n-2}\in GL(n-2, \mathbb{F})$ and a diagonal matrix $D_{n-2}$ such that
$$
Q_{n-2}\cdot u_n (X)\cdot D_{n-2} =J_{n-2}
$$
Let us consider $X_n=g_{n-2} (Q_{n-2} )\cdot X\cdot g_{n-2} (D_{n-2})$. Clearly, $X_n$ is a twisted cocycle
equivalent to $X$ and $u_n (X_n )=J_{n-2}$.

Applying elementary row operations with entries from $\mathbb{F}$ and multiplying by a proper
diagonal matrix we can obtain a new cocycle
$Y_n=(y_{pg})$ equivalent to $X$ with the following properties:

1. $u_n (Y_n)=J_{n-2}$;

2. $y_{12}=y_{13}=...=y_{1,n-1}=0$ and $y_{n2}=y_{n3}=...=y_{n,n-1}=0$;

3. $y_{11}=y_{1n}=1$, here we use the fact that if $y_{pq}=0$, then $y_{p,n+1-q}=0$.

It follows from the cocycle condition $\overline{Y_n} =Y_n \cdot S\cdot\mathrm{diag}(h_1,...,h_n)$
that $h_1=h_n=1$ and hence, $y_{n1}=\overline{y_{nn}}$.

Now, we can use the first row to achieve  $y_{n1}=-y_{nn}=\sqrt{d}$ and after that, we use the first
and the last rows to ``kill'' $\{y_{k1},\ k=2,...,n-1\}$. Then   the set $\{y_{kn},\ k=2,...,n-1\}$ will be ``killed''
automatically.  We have obtained $J_n$ from $X$ and thus, have proved that $X$ is equivalent to $J_n$.

\end{proof}
Let us now investigate twisted cohomologies associated to arbitrary non-skewsymmetric $r$-matrices.
The following two results will prove to be useful for our study.

\begin{lem}\label{equiv}

Assume $X\in \overline{Z}(sl(n),r)$. Then there exists a twisted cocycle $Y\in GL(n,\mathbb{F}[\sqrt{d}])$, associated to $r$, and equivalent to $X$.

\end{lem}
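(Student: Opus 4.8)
The plan is to reduce the statement to the triviality of the \emph{untwisted} Belavin--Drinfeld cohomology (the theorem of Section 3), applied not over $\mathbb{F}$ but over the quadratic extension $\mathbb{F}[\sqrt{d}]$. The point is that the defining conditions of a twisted cocycle split into a part that only sees $Gal(\overline{\mathbb{F}}/\mathbb{F}[\sqrt{d}])$ and a part involving the conjugation $\sigma_{2}$; the former is exactly an untwisted cocycle condition for the bigger base field.

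First I would isolate the content of the second defining condition. By hypothesis $X\in\overline{Z}(sl(n),r)$ satisfies $(\mathrm{Ad}_{X^{-1}\sigma(X)}\otimes\mathrm{Ad}_{X^{-1}\sigma(X)})(r)=r$ for every $\sigma\in Gal(\overline{\mathbb{F}}/\mathbb{F}[\sqrt{d}])$, equivalently $X^{-1}\sigma(X)\in C(r)$ for all such $\sigma$. Since $r$ is defined over $\mathbb{F}$, so that $\sigma(r)=r$ for all $\sigma\in Gal(\overline{\mathbb{F}}/\mathbb{F})$ as established above, this is precisely the assertion that $X$ is an \emph{untwisted} Belavin--Drinfeld cocycle for the base field $\mathbb{F}[\sqrt{d}]$, i.e.\ $X\in Z(sl(n,\mathbb{F}[\sqrt{d}]),r)$. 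The first condition, involving $\sigma_{2}$, plays no role at this stage and is set aside.

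Next I would invoke the untwisted triviality theorem with $\mathbb{F}$ replaced by $\mathbb{F}[\sqrt{d}]$, which is legitimate because that theorem holds over an arbitrary field of characteristic zero, the algebraic closure is unchanged, and $C(r)$ is defined inside $GL(n,\overline{\mathbb{F}})$. It yields $Q_{1}\in GL(n,\mathbb{F}[\sqrt{d}])$ and $C_{1}\in C(r)$ with $X=Q_{1}C_{1}$. The tempting move $Q_{1}^{-1}X=C_{1}$ is illegal: twisted equivalence permits left multiplication only by matrices from $GL(n,\mathbb{F})$, whereas $Q_{1}$ lives over $\mathbb{F}[\sqrt{d}]$. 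The key manoeuvre is instead to absorb $C_{1}$ on the \emph{right}: set $Y:=XC_{1}^{-1}=Q_{1}$. Then $Y=I\cdot X\cdot C_{1}^{-1}$ with $I\in GL(n,\mathbb{F})$ and $C_{1}^{-1}\in C(r)$, so $Y$ is twisted-equivalent to $X$, while at the same time $Y=Q_{1}\in GL(n,\mathbb{F}[\sqrt{d}])$, as required.

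It remains to confirm that $Y$ is still a twisted cocycle, i.e.\ that right multiplication by an element of $C(r)$ preserves both defining conditions. For this I would use that $C(r)$ is stable under every $\sigma\in Gal(\overline{\mathbb{F}}/\mathbb{F})$ (a consequence of $\sigma(r)=r$) and that $(\mathrm{Ad}_{C}\otimes\mathrm{Ad}_{C})$ commutes with the flip, hence fixes both $r$ and $r^{21}$ for $C\in C(r)$; substituting $Y=XC_{1}^{-1}$ into $Y^{-1}\sigma_{2}(Y)$ and into $Y^{-1}\sigma(Y)$ then reduces the two conditions for $Y$ to the corresponding ones for $X$. The only genuinely delicate points are this verification together with the care needed in transferring the untwisted result to the enlarged field $\mathbb{F}[\sqrt{d}]$ and keeping the change of basis strictly on the $C(r)$ side; once those are settled, everything else is formal.
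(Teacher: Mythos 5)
Your proof is correct and takes essentially the same route as the paper's: both view the conditions $X^{-1}\sigma(X)\in C(r)$ for $\sigma\in Gal(\overline{\mathbb{F}}/\mathbb{F}[\sqrt{d}])$ as an untwisted cocycle condition over the base field $\mathbb{F}[\sqrt{d}]$, invoke the triviality of the untwisted Belavin--Drinfeld cohomology over that field to obtain $X=YC$ with $Y\in GL(n,\mathbb{F}[\sqrt{d}])$ and $C\in C(r)$, and then observe that the $\sigma_{2}$-condition transfers from $X$ to $Y$. Your explicit check that right multiplication by an element of $C(r)$ preserves both defining conditions (using Galois stability of $C(r)$ and invariance of $r^{21}$) merely spells out what the paper's proof leaves implicit.
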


\begin{proof}
We have $X\in GL(n,\overline{\mathbb{F}})$ and for any $\sigma\in Gal(\overline{\mathbb{F}}/\mathbb{F}[\sqrt{d}])$,
$X^{-1}\sigma(X)\in C(r)$. On the other hand, the Belavin--Drinfeld cohomology for $sl(n)$ associated to $r$ is trivial. This implies that $X$ is equivalent to the identity, where in the equivalence relation
we consider $\mathbb{F}[\sqrt{d}]$
instead of $\mathbb{F}$. So there exists $Y\in GL(n,\mathbb{F}[\sqrt{d}])$ and $C\in C(r)$ such that $X=YC$. On the other hand, $Y\in \overline{Z}(sl(n),r)$
 since $(\mathrm{Ad}_{X^{-1}\sigma_{2}(X)}\otimes \mathrm{Ad}_{X^{-1}\sigma_{2}(X)})(r)=r^{21}$ implies $(\mathrm{Ad}_{Y^{-1}\sigma_{2}(Y)}\otimes \mathrm{Ad}_{Y^{-1}\sigma_{2}(Y)})(r)=r^{21}$.
\end{proof}

\begin{prop}\label{decRJD}
Let $r$ be a non-skewsymmetric $r$-matrix
associated to an admissible triple $(\Gamma_{1},\Gamma_{2},\tau)$
satisfying $s(\Gamma_{1})=\Gamma_{2}$ and $s\tau=\tau^{-1}s$. If
$X\in \overline{Z}(sl(n,\mathbb{F})), r)$, then there exist $R\in GL(n,\mathbb{F})$ and $D\in \mathrm{diag}(n, \overline{\mathbb{F}})$ such that $X=RJD$.

\end{prop}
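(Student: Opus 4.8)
The plan is to combine Galois descent with two structural facts about $r$ that hold under the hypotheses on $(\Gamma_{1},\Gamma_{2},\tau)$. First I would record the identity $\overline{J}=JS$, which is immediate from the explicit entries of $J$: conjugation changes the lower-half entries $-\sqrt{d}\mapsto\sqrt{d}$ and $\sqrt{d}\mapsto-\sqrt{d}$, which is exactly the effect of permuting the columns of $J$ by $S$. Hence $J^{-1}\sigma_{2}(J)=S$, so that $J$ is itself a twisted cocycle associated to $r$ as soon as $(\mathrm{Ad}_{S}\otimes \mathrm{Ad}_{S})(r)=r^{21}$. Establishing this last equality is the first main point: since $\mathrm{Ad}_{S}$ carries each $e_{\alpha}$ ($\alpha>0$) to a nonzero multiple of $e_{-s(\alpha)}$, and the Killing normalization $K(e_{\alpha},e_{-\alpha})=1$ forces the two multiples to be mutually inverse, the standard part $\sum_{\alpha>0}e_{\alpha}\otimes e_{-\alpha}$ is sent to $\sum_{\beta>0}e_{-\beta}\otimes e_{\beta}$, i.e. to its flip; the conditions $s(\Gamma_{1})=\Gamma_{2}$ and $s\tau=\tau^{-1}s$ are precisely what guarantees that the twist terms and the Cartan part $r_{0}$ are likewise carried to the corresponding terms of $r^{21}$.

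Next I would reduce to a cocycle with entries in $\mathbb{F}[\sqrt{d}]$. By Lemma \ref{equiv} we may write $X=YC$ with $Y\in GL(n,\mathbb{F}[\sqrt{d}])$ a twisted cocycle and $C\in C(r)$. Since the centralizer $C(r)$ consists of diagonal matrices (as established in \cite{SP1} in the course of the untwisted computation), $C$ is diagonal, so it suffices to produce a decomposition $Y=RJD'$ with $R\in GL(n,\mathbb{F})$ and $D'$ diagonal; then $X=RJ(D'C)$ with $D'C$ again diagonal. For such a $Y$ the twisted condition reads $(\mathrm{Ad}_{Y^{-1}\overline{Y}}\otimes \mathrm{Ad}_{Y^{-1}\overline{Y}})(r)=r^{21}$, and comparing with $(\mathrm{Ad}_{S}\otimes \mathrm{Ad}_{S})(r)=r^{21}$ gives $\mathrm{Ad}_{S^{-1}Y^{-1}\overline{Y}}(r)=r$, that is $S^{-1}Y^{-1}\overline{Y}\in C(r)$. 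Diagonality of $C(r)$ then yields $Y^{-1}\overline{Y}=SC_{0}$ with $C_{0}=\mathrm{diag}(c_{1},\dots,c_{n})$.

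Finally I would solve for the diagonal factor. Seeking $R:=YD'^{-1}J^{-1}$ to be Galois-invariant, a direct computation using $\overline{J}=JS$ shows that $\overline{R}=R$ is equivalent to $Y^{-1}\overline{Y}=D'^{-1}S\overline{D'}$, i.e. to the scalar equations $\overline{d'_{n+1-k}}=c_{n+1-k}\,d'_{k}$ for all $k$. Applying $\sigma_{2}$ to $Y^{-1}\overline{Y}=SC_{0}$ and using $(Y^{-1}\overline{Y})(\overline{Y}^{-1}Y)=1$ gives $SC_{0}S=\overline{C_{0}}^{-1}$, hence the compatibility relations $c_{n+1-k}\overline{c_{k}}=1$; these make the paired equations for $\{k,n+1-k\}$ solvable (choose $d'_{n+1-k}$ freely and set $d'_{k}=\overline{c_{k}}\,\overline{d'_{n+1-k}}$), and for $n$ odd the middle equation $\overline{d'_{m}}/d'_{m}=c_{m}$ is solvable because $c_{m}$ has norm $1$, by Hilbert's Theorem~90 for $\mathbb{F}[\sqrt{d}]/\mathbb{F}$. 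With such a $D'$ the matrix $R=YD'^{-1}J^{-1}$ lies in $GL(n,\mathbb{F})$ and $Y=RJD'$, which gives $X=RJ(D'C)$ after reinstating $C$.

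The technical heart, and the step I expect to be most delicate, is the pair of Lie-theoretic inputs in the first two paragraphs: verifying $(\mathrm{Ad}_{S}\otimes \mathrm{Ad}_{S})(r)=r^{21}$ from the combinatorial symmetry of the triple (the bookkeeping of the twist terms under $s\tau=\tau^{-1}s$ and the handling of the Cartan part $r_{0}$ being the fiddly parts), together with the fact that $C(r)$ is contained in the diagonal torus. Once these are in place, the descent and the Hilbert~90 step are routine.
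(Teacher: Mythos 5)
Your argument is correct and, in its decisive step, genuinely different from the paper's. Both proofs start the same way: Lemma~\ref{equiv} gives $X=YC$ with $Y\in GL(n,\mathbb{F}[\sqrt{d}])$ and $C\in C(r)$, and comparing the twisted cocycle condition for $Y$ with $(\mathrm{Ad}_{S}\otimes \mathrm{Ad}_{S})(r)=r^{21}$, together with $C(r)\subset \mathrm{diag}(n,\overline{\mathbb{F}})$ (Lemma 4.11 of \cite{SP1}), yields $Y^{-1}\overline{Y}=SC_{0}$ with $C_{0}$ diagonal. At that point the paper simply invokes Theorem~\ref{case DJ}: its inductive row-reduction in fact shows that any $Y\in GL(n,\mathbb{F}[\sqrt{d}])$ with $\overline{Y}=YSD$, $D$ diagonal, factors as $QJD'$, and Proposition~\ref{decRJD} is read off from that. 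You instead solve the descent equation directly: from $SC_{0}S=\overline{C_{0}}^{-1}$ you extract the relations $\overline{c_{k}}\,c_{n+1-k}=1$, solve the paired scalar equations $\overline{d'_{k}}=c_{k}d'_{n+1-k}$ explicitly, and dispose of the middle entry for odd $n$ by Hilbert~90 for $\mathbb{F}[\sqrt{d}]/\mathbb{F}$ (note $c_{m}$ does lie in $\mathbb{F}[\sqrt{d}]$, since $C_{0}=SY^{-1}\overline{Y}$ does, so Hilbert~90 applies). Your computations check out: with such a $D'$, the matrix $R=YD'^{-1}J^{-1}$ is $\sigma_{2}$-invariant with entries in $\mathbb{F}[\sqrt{d}]$, hence in $GL(n,\mathbb{F})$. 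This buys something real: the paper's induction in Theorem~\ref{case DJ} carries unexplained genericity assumptions (e.g.\ $a_{21}a_{22}\neq 0$ for $n=2$), whereas your Hilbert-90 step is unconditional and uniform in $n$, and incidentally furnishes an independent, cleaner proof of the decomposition underlying Theorem~\ref{case DJ} itself; the paper's route is shorter only because that theorem is needed anyway for the Drinfeld--Jimbo case.

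One caveat. Your justification of $(\mathrm{Ad}_{S}\otimes \mathrm{Ad}_{S})(r)=r^{21}$ from the symmetry of the triple alone is too quick for the Cartan part: writing $r_{0}=\tfrac{1}{2}\Omega_{0}+\lambda$ with $\lambda\in\mathfrak{h}\wedge\mathfrak{h}$, the identity forces $(s\otimes s)(\lambda)=-\lambda$, which the conditions $s(\Gamma_{1})=\Gamma_{2}$, $s\tau=\tau^{-1}s$ do not imply (already for $\Gamma_{1}=\Gamma_{2}=\emptyset$ every skew $\lambda$ is admissible). This constraint on the continuous parameter comes from the hypothesis $\overline{Z}(sl(n,\mathbb{F}),r)\neq\emptyset$, via the analysis behind Proposition~\ref{cond_tau} in \cite{SP1}. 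Since the paper's own proof asserts the same identity without argument, this is a shared, citable gap rather than a defect of your route --- but you should cite it as such rather than claim it follows from the combinatorics of the triple.
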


\begin{proof}
According to Lemma \ref{equiv}, $X=YC$, where $Y\in GL(n, \mathbb{F}[\sqrt{d}])$
and $C\in C(r)$. Since $(\mathrm{Ad}_{Y^{-1}\sigma_{2}(Y)}\otimes \mathrm{Ad}_{Y^{-1}\sigma_{2}(Y)})(r)=r^{21}$ and $(\mathrm{Ad}_{S}\otimes \mathrm{Ad}_{S})(r)=r^{21}$,
it follows that $S^{-1}Y^{-1}\sigma_{2}(Y)\in C(r)$. On the other hand,
by Lemma 4.11 from \cite{SP1}, $C(r)\subset \mathrm{diag}(n,\overline{\mathbb{F}})$. We get
$S^{-1}Y^{-1}\sigma_{2}(Y)\in \mathrm{diag}(n,\overline{\mathbb{F}})$. Now Theorem \ref{case DJ} implies that $Y=RJD_{0}$, where $R\in GL(n,\mathbb{F})$ and $D_{0}\in \mathrm{diag}(n,\overline{\mathbb{F}})$. Consequently, $X=RJD_{0}C=RJD$ with
$D=D_{0}C\in \mathrm{diag}(n,\overline{\mathbb{F}})$.

\end{proof}

Let $T$ denote the automorphism of $\mathrm{diag}(n, \overline{\mathbb{F}})$
defined by $T(D)=SD^{-1}S\overline{D}$.

\begin{lem}\label{lem1_T}
Let $r$ be a non-skewsymmetric $r$-matrix with centralizer $C(r)$. Let $X=RJD$, with $R\in GL(n,\mathbb{F})$ and $D\in \mathrm{diag}(n, \mathbb{F}[\sqrt{d}])$. Then $X \in \overline{Z}(sl(n,\mathbb{F}),r)$ if and only if $D\in T^{-1}(C(r))$.

\end{lem}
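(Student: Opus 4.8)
The plan is to reduce the two defining conditions of a twisted cocycle to a single condition on the diagonal factor $D$ and then to recognize that condition as membership in $T^{-1}(C(r))$. First I would observe that $X=RJD$ lies in $GL(n,\mathbb{F}[\sqrt{d}])$, since $R\in GL(n,\mathbb{F})\subset GL(n,\mathbb{F}[\sqrt{d}])$, $D\in\mathrm{diag}(n,\mathbb{F}[\sqrt{d}])$ and $J\in GL(n,\mathbb{F}[\sqrt{d}])$. Consequently $\sigma(X)=X$ for every $\sigma\in Gal(\overline{\mathbb{F}}/\mathbb{F}[\sqrt{d}])$, so $X^{-1}\sigma(X)=I\in C(r)$ and the unramified part of the twisted-cocycle condition holds automatically. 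Thus $X\in\overline{Z}(sl(n,\mathbb{F}),r)$ if and only if the remaining condition $(\mathrm{Ad}_{X^{-1}\sigma_2(X)}\otimes\mathrm{Ad}_{X^{-1}\sigma_2(X)})(r)=r^{21}$ is satisfied, where $\sigma_2$ restricts to the conjugation $a+b\sqrt{d}\mapsto a-b\sqrt{d}$, so that $\sigma_2(X)=\overline{X}$.

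Next I would compute $X^{-1}\overline{X}$ explicitly. The key algebraic identity is $\overline{J}=JS$, equivalently $J^{-1}\overline{J}=S$, which I would verify directly from the entries of $J$ defined before Theorem \ref{case DJ}: conjugation sends $\sqrt{d}\mapsto-\sqrt{d}$, while right multiplication by $S$ reverses the order of the columns of $J$, and one checks that these two operations produce the same matrix (this is exactly the relation already visible in the displayed $n=2$ and $n=3$ cases). Since $\overline{R}=R$ and $\overline{D}$ is the entrywise conjugate, this yields $X^{-1}\overline{X}=D^{-1}J^{-1}R^{-1}\cdot R\,\overline{J}\,\overline{D}=D^{-1}J^{-1}\overline{J}\,\overline{D}=D^{-1}S\overline{D}$. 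Finally, using $S^2=I$ I would rewrite this as $X^{-1}\overline{X}=S\,(SD^{-1}S\overline{D})=S\,T(D)$, bringing the automorphism $T$ into the picture.

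It then remains to feed this into the cocycle condition. Writing $\mathrm{Ad}_{S\,T(D)}=\mathrm{Ad}_S\circ\mathrm{Ad}_{T(D)}$, the condition reads $(\mathrm{Ad}_S\otimes\mathrm{Ad}_S)\bigl((\mathrm{Ad}_{T(D)}\otimes\mathrm{Ad}_{T(D)})(r)\bigr)=r^{21}$. Here I would use the relation $(\mathrm{Ad}_S\otimes\mathrm{Ad}_S)(r)=r^{21}$ — which holds because the admissibility constraints $s(\Gamma_1)=\Gamma_2$ and $s\tau=\tau^{-1}s$ are forced as soon as $\overline{Z}(sl(n,\mathbb{F}),r)$ is nonempty (Proposition \ref{cond_tau}), exactly as exploited in the proof of Proposition \ref{decRJD} — together with the fact that $\mathrm{Ad}_S\otimes\mathrm{Ad}_S$ is an involution since $S^2=I$. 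Applying $\mathrm{Ad}_S\otimes\mathrm{Ad}_S$ to both sides and cancelling, the condition collapses to $(\mathrm{Ad}_{T(D)}\otimes\mathrm{Ad}_{T(D)})(r)=r$, i.e. $T(D)\in C(r)$, i.e. $D\in T^{-1}(C(r))$. I would also note that $T$ is a genuine automorphism of $\mathrm{diag}(n,\overline{\mathbb{F}})$ and that $C(r)\subset\mathrm{diag}(n,\overline{\mathbb{F}})$ by Lemma 4.11 of \cite{SP1}, so that $T^{-1}(C(r))$ is a meaningful subset of the torus.

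The main obstacle is the key identity $J^{-1}\overline{J}=S$: everything downstream is formal manipulation with $\mathrm{Ad}$, the involution $S^2=I$, and the relation $(\mathrm{Ad}_S\otimes\mathrm{Ad}_S)(r)=r^{21}$, but the entire argument hinges on this identity, which must be extracted from the explicit anti-diagonal/diagonal structure of $J$. Establishing it for general $n$, rather than merely confirming the $n=2,3$ patterns, is the one step that requires genuine care.
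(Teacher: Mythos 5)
Your proof is correct and follows essentially the same route as the paper: you decompose $\overline{X}$ using the identity $\overline{J}=JS$, compute $X^{-1}\overline{X}=D^{-1}S\overline{D}=S\,T(D)$, and reduce the twisted-cocycle condition to $T(D)\in C(r)$ via $(\mathrm{Ad}_S\otimes\mathrm{Ad}_S)(r)=r^{21}$, exactly as in the paper's proof of Lemma \ref{lem1_T}. The only difference is that you make explicit two points the paper leaves implicit — that the conditions for $\sigma\in Gal(\overline{\mathbb{F}}/\mathbb{F}[\sqrt{d}])$ hold automatically because $X\in GL(n,\mathbb{F}[\sqrt{d}])$, and that $(\mathrm{Ad}_S\otimes\mathrm{Ad}_S)(r)=r^{21}$ rests on the constraints $s(\Gamma_1)=\Gamma_2$, $s\tau=\tau^{-1}s$ from Proposition \ref{cond_tau} — which is a welcome clarification rather than a deviation.
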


\begin{proof}
Let us first note that $X\in \overline{Z}(sl(n,\mathbb{F}),r)$ if and only if for any $\sigma\in Gal(\overline{\mathbb{F}}/\mathbb{F}[\sqrt{d}])$,
$X^{-1}\sigma(X)\in C(r)$ and $SX^{-1}\overline{X}\in C(r)$.
We have $X=RJD$ which implies $\overline{X}=\overline{R}\overline{J}\overline{D}=RJS\overline{D}=RJDD^{-1}S\overline{D}=XD^{-1}S\overline{D}=XST(D)$.
We immediately get
that $SX^{-1}\overline{X}\in C(r)$ if and only if $T(D)\in C(r)$.

\end{proof}

\begin{lem}\label{lem2_T}
Let $X_{1}=R_{1}JD_{1}$ and $X_{2}=R_{2}JD_{2}$ be two Belavin--Drinfeld
twisted cocycles associated to $r$. Then $X_{1}$ and $X_{2}$ are equivalent
if and only if
$D_{2}D_{1}^{-1} \in C(r)\cdot Ker(T)$.
\end{lem}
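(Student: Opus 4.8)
The plan is to translate the equivalence of cocycles into a purely diagonal condition by conjugating with $J$, and then to identify $\mathrm{Ker}(T)$ as the exact obstruction to the $\mathbb{F}$-rationality of such a conjugate. First I would unwind the definition of equivalence: $X_1$ and $X_2$ are equivalent precisely when there are $Q\in GL(n,\mathbb{F})$ and $C\in C(r)$ with $R_2 J D_2=Q R_1 J D_1 C$. Since $R_1,R_2\in GL(n,\mathbb{F})$, setting $R:=R_2^{-1}QR_1$ gives a bijection between admissible $Q$ and arbitrary $R\in GL(n,\mathbb{F})$, so the equivalence is the same as the existence of $R\in GL(n,\mathbb{F})$ and $C\in C(r)$ with $JD_2=RJD_1C$, i.e. $R=J\,(D_2 D_1^{-1}C^{-1})\,J^{-1}$. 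Here I would use that $C(r)\subset\mathrm{diag}(n,\overline{\mathbb{F}})$ and that diagonal matrices commute to rewrite $D_2 C^{-1}D_1^{-1}=D_2 D_1^{-1}C^{-1}$; thus everything is controlled by the single diagonal matrix $V:=D_2 D_1^{-1}C^{-1}$.

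The crux is the rationality criterion: for diagonal $V$, one has $JVJ^{-1}\in GL(n,\mathbb{F})$ if and only if $V\in\mathrm{Ker}(T)$ (together with the $\mathbb{F}[\sqrt d]$-rationality addressed below). To prove it I would first record the identity $\overline{J}=JS$, which follows by inspection of the explicit entries of $J$ and of $S$ (right multiplication by $S$ swaps the two nonzero entries in each row of $J$, which is exactly what conjugation does), together with $S^2=I$, so that $\overline{J^{-1}}=SJ^{-1}$. Applying $\sigma_2$ then gives $\overline{JVJ^{-1}}=JS\,\overline{V}\,SJ^{-1}$, and this equals $JVJ^{-1}$ exactly when $S\overline{V}S=V$, i.e. $\overline{V}=SVS$; recalling $T(V)=SV^{-1}S\overline{V}$, this is precisely $T(V)=I$. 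For the remaining Galois elements $\sigma\in Gal(\overline{\mathbb{F}}/\mathbb{F}[\sqrt d])$ one has $\sigma(J)=J$ and $\sigma(S)=S$, so $\sigma(JVJ^{-1})=J\sigma(V)J^{-1}$, which is fixed for all such $\sigma$ iff $V\in\mathrm{diag}(n,\mathbb{F}[\sqrt d])$. Since $Gal(\overline{\mathbb{F}}/\mathbb{F})$ is the disjoint union of $Gal(\overline{\mathbb{F}}/\mathbb{F}[\sqrt d])$ and its $\sigma_2$-coset, membership of $JVJ^{-1}$ in $GL(n,\mathbb{F})$ amounts to invariance under $\sigma_2$ and under $Gal(\overline{\mathbb{F}}/\mathbb{F}[\sqrt d])$, hence to $V\in\mathrm{Ker}(T)\cap\mathrm{diag}(n,\mathbb{F}[\sqrt d])$.

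Combining the two steps, $X_1$ and $X_2$ are equivalent iff there is $C\in C(r)$ with $D_2 D_1^{-1}C^{-1}\in\mathrm{Ker}(T)$, and since $C(r)$ and $\mathrm{Ker}(T)$ are subgroups of the abelian group $\mathrm{diag}(n,\overline{\mathbb{F}})$, this holds iff $D_2 D_1^{-1}\in C(r)\cdot\mathrm{Ker}(T)$. Concretely, for the forward implication I would take the $C$ provided by equivalence and note $V=D_2 D_1^{-1}C^{-1}\in\mathrm{Ker}(T)$, whence $D_2 D_1^{-1}=VC\in C(r)\cdot\mathrm{Ker}(T)$; for the converse, writing $D_2 D_1^{-1}=C_0 V_0$ with $C_0\in C(r)$ and $V_0\in\mathrm{Ker}(T)$, I would set $C:=C_0$ and $Q:=R_2\,(JV_0 J^{-1})\,R_1^{-1}$ and verify directly that $QX_1 C=R_2 J (V_0 C_0) D_1=R_2 J D_2=X_2$, using $V_0 C_0=D_2 D_1^{-1}$.

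I expect the main obstacle to be exactly the control of the field of definition in the rationality criterion: $\mathrm{Ker}(T)$ is a priori a subgroup of $\mathrm{diag}(n,\overline{\mathbb{F}})$, whereas $JV_0J^{-1}\in GL(n,\mathbb{F})$ also requires $V_0\in\mathrm{diag}(n,\mathbb{F}[\sqrt d])$. The delicate point is therefore to guarantee that in the decomposition $D_2 D_1^{-1}=C_0 V_0$ the representative $V_0$ can be chosen $\mathbb{F}[\sqrt d]$-rational. I would handle this using that $D_1,D_2\in\mathrm{diag}(n,\mathbb{F}[\sqrt d])$ (as in Lemma \ref{lem1_T}) together with the $\sigma$-stability of $C(r)$ coming from $\sigma(r)=r$ for $\sigma\in Gal(\overline{\mathbb{F}}/\mathbb{F}[\sqrt d])$, which I expect to let me adjust $C_0$ within $C(r)$ so that $V_0=D_2 D_1^{-1}C_0^{-1}$ lands in $\mathbb{F}[\sqrt d]$ without changing its class modulo $\mathrm{Ker}(T)$.
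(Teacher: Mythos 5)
Your main line coincides with the paper's proof: the same factorization $Q=R_{2}J(D_{2}D_{1}^{-1}C^{-1})J^{-1}R_{1}^{-1}$, the same use of $\overline{J}=JS$ to convert $Q=\overline{Q}$ into $T(D_{2}D_{1}^{-1}C^{-1})=I$, the same commutation of diagonal factors, and in the converse the same matrix $Q:=R_{2}JD_{0}J^{-1}R_{1}^{-1}$ with the same two verifications. You are in fact more careful than the paper, which checks only $Q=\overline{Q}$ and never addresses invariance under $Gal(\overline{\mathbb{F}}/\mathbb{F}[\sqrt{d}])$; your observation that full $\mathbb{F}$-rationality of $Q$ forces $V_{0}=D_{2}D_{1}^{-1}C_{0}^{-1}$ to lie in $\mathrm{diag}(n,\mathbb{F}[\sqrt{d}])$ is correct and identifies a real issue with the lemma as literally stated.

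However, your proposed repair --- adjusting $C_{0}$ inside $C(r)$ so that $V_{0}$ becomes $\mathbb{F}[\sqrt{d}]$-rational --- cannot work in general: the impossibility of exactly this adjustment is what makes the cohomology of Theorem \ref{main} nontrivial. Concretely, take $\mathbb{F}=\mathbb{Q}$, $d=3$, $r=r_{CG}$ in $sl(3)$ (one string $\alpha_{1}\rightarrow\alpha_{2}$), and choose the lift $\sigma_{2}$ so that $\sigma_{2}(\sqrt{2})=\sqrt{2}$ (possible since $\sqrt{2}\notin\mathbb{Q}(\sqrt{3})$; note that $\sigma_{2}$, hence $T$ on non-rational entries, genuinely depends on this choice --- another sign that the intended domain is $\mathrm{diag}(n,\mathbb{F}[\sqrt{d}])$). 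In the coordinates $\varphi$ of Theorem \ref{main} one has $(2,1,1)=(\sqrt{2},1/\sqrt{2},1)\cdot(\sqrt{2},\sqrt{2},1)$, with the first factor in $Ker(\widetilde{T})$ and the second in $\varphi^{-1}(C(r))$; hence $W=\mathrm{diag}(2,1,1)\in C(r)\cdot Ker(T)$ when these groups are taken with entries in $\overline{\mathbb{Q}}$. One checks $T(W)\in C(r)$, so $X_{1}=J$ and $X_{2}=JW$ are both twisted cocycles with $D_{2}D_{1}^{-1}=W$; yet they are inequivalent, since equivalence would (by your own forward direction, where $C=X_{1}^{-1}Q^{-1}X_{2}$ is automatically $\mathbb{F}[\sqrt{d}]$-rational) yield $2=q\overline{q}$ for some $q\in\mathbb{Q}(\sqrt{3})$, impossible because $2\notin N_{\mathbb{Q}(\sqrt{3})/\mathbb{Q}}(\mathbb{Q}(\sqrt{3})^{*})$ (mod $3$ descent). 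So with $Ker(T)$ and $C(r)$ read over $\overline{\mathbb{F}}$, the ``if'' direction of the lemma is false, and no adjustment argument can save it. The statement is true --- and both your argument and the paper's are complete --- under the convention the paper uses implicitly (cf.\ Lemma \ref{lem1_T}, and Step 2, Case 2 of Theorem \ref{main}, whose contradiction $t=\overline{r}_{i_{1}}r_{i_{1}}$ only bites when $r_{i_{1}}\in\mathbb{F}(\sqrt{d})$): all diagonal factors $D_{1},D_{2},D_{0},C$ are taken with entries in $\mathbb{F}[\sqrt{d}]$. With that reading you should drop the adjustment step entirely: in the forward direction rationality of $C$ is automatic, and in the converse $Q\in GL(n,\mathbb{F}[\sqrt{d}])$ together with $Q=\overline{Q}$ already gives $Q\in GL(n,\mathbb{F})$.
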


\begin{proof}
Assume the two cocycles are equivalent. There exist $Q\in GL(n,\mathbb{F})$ and $C\in C(r)$ such that $X_{2}=QX_{1}C$. Then $Q=R_{2}JD_{2}C^{-1}D_{1}^{-1}J^{-1}R_{1}^{-1}$. Since $Q=\overline{Q}$
and $\overline{J}=JS$, we get
$D_{2}C^{-1}D_{1}^{-1}=S\overline{D_{2}C^{-1}D_{1}^{-1}}S$. Thus $D_{2}C^{-1}D_{1}^{-1}\in Ker(T)$.
On the other hand,
$C\in C(r)\subset \mathrm{diag}(n,\overline{\mathbb{F}})$,
so $D_{2}C^{-1}D_{1}^{-1}=
D_{2}D_{1}^{-1}C^{-1}$. We have obtained that $D_{2}D_{1}^{-1} \in C(r)\cdot Ker(T)$.
Conversely, if this condition is satisfied, then write
$D_{2}D_{1}^{-1}=D_{0}C$, where $C\in C(r)$ and $D_{0}\in Ker(T)$.
Denote $Q:=R_{2}JD_{0}J^{-1}R_{1}^{-1}$. Then, by construction, $Q=\overline{Q}$ and $X_{2}=QX_{1}C$.
\end{proof}

By lemmas \ref{lem1_T} and \ref{lem2_T}, we get

\begin{prop}

Let $r$ be a non-skewsymmetric $r$-matrix
associated to an admissible triple $(\Gamma_{1},\Gamma_{2},\tau)$
satisfying $s(\Gamma_{1})=\Gamma_{2}$ and $s\tau=\tau^{-1}s$.
Then \[\overline{H}_{BD}^{1}(sl(n,\mathbb{F}),r)=\frac{T^{-1}(C(r))}{C(r)\cdot Ker(T)}.\]
\end{prop}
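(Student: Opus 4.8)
The plan is to promote the abstract correspondence of Proposition \ref{co} to an explicit bijection of sets by inserting the normal form $X=RJD$ supplied by Proposition \ref{decRJD} and reading off the two lemmas just proved. Concretely, I would define
\[
\Phi\colon \overline{H}_{BD}^{1}(sl(n,\mathbb{F}),r)\longrightarrow \frac{T^{-1}(C(r))}{C(r)\cdot Ker(T)},\qquad [X]\mapsto [D],
\]
where $X=RJD$ is a decomposition of a chosen representative with $R\in GL(n,\mathbb{F})$ and $D\in\mathrm{diag}(n,\mathbb{F}[\sqrt{d}])$. The first task is to guarantee that such a decomposition exists for every class: by Lemma \ref{equiv} each class is represented by a cocycle $Y\in GL(n,\mathbb{F}[\sqrt{d}])$, and Proposition \ref{decRJD} applied to $Y$ gives $Y=RJD$; since $R$ and $J$ are invertible over $\mathbb{F}[\sqrt{d}]$, the factor $D=J^{-1}R^{-1}Y$ automatically lies in $\mathrm{diag}(n,\mathbb{F}[\sqrt{d}])$. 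Lemma \ref{lem1_T} then forces $D\in T^{-1}(C(r))$, so the target of $\Phi$ is the right one.

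Next I would check that $\Phi$ is well defined and injective in one stroke, which is precisely the content of Lemma \ref{lem2_T}: two normal forms $R_{1}JD_{1}$ and $R_{2}JD_{2}$ represent the same cohomology class if and only if $D_{2}D_{1}^{-1}\in C(r)\cdot Ker(T)$, i.e.\ if and only if $D_{1}$ and $D_{2}$ lie in the same coset. In particular the value $[D]$ is independent both of the chosen representative and of the chosen decomposition. For surjectivity, given any $D\in T^{-1}(C(r))$ taken in $\mathrm{diag}(n,\mathbb{F}[\sqrt{d}])$, Lemma \ref{lem1_T} (with $R$ the identity) shows that $JD$ is a genuine twisted cocycle, and by construction $\Phi([JD])=[D]$; hence every coset is hit. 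Assembling these three observations yields the asserted bijection.

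Before running the argument I would record that the right-hand side is a genuine coset space, i.e.\ that $C(r)\cdot Ker(T)$ is a subgroup of $T^{-1}(C(r))$. Since $\mathrm{diag}(n,\overline{\mathbb{F}})$ is abelian and $T$ is a group homomorphism, $T^{-1}(C(r))$ and $Ker(T)$ are subgroups; it remains to verify $T(C(r))\subseteq C(r)$. This follows from two stability properties of the centralizer: $C(r)$ is invariant under $\sigma_{2}$ (the Galois action fixes $r$) and under conjugation by $S$ (because $(\mathrm{Ad}_{S}\otimes\mathrm{Ad}_{S})(r)=r^{21}$ and $C(r)$ fixes $r^{21}$ as well), so that $T(C)=SC^{-1}S\,\overline{C}=(SCS)^{-1}\sigma_{2}(C)$ is a product of two elements of $C(r)$.

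The step I expect to be the main obstacle is the matching of fields of definition. Proposition \ref{decRJD} in general only yields $D\in\mathrm{diag}(n,\overline{\mathbb{F}})$, whereas Lemmas \ref{lem1_T} and \ref{lem2_T} are stated over $\mathbb{F}[\sqrt{d}]$ while the target quotient $T^{-1}(C(r))/(C(r)\cdot Ker(T))$ is formed over $\overline{\mathbb{F}}$. The delicate point is therefore to ensure that every coset of $C(r)\cdot Ker(T)$ inside $T^{-1}(C(r))$ admits a representative with entries in $\mathbb{F}[\sqrt{d}]$, so that the restriction to $\mathbb{F}[\sqrt{d}]$-rational cocycles — which already suffices by Lemma \ref{equiv} — still surjects onto the full quotient. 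I would handle this by reducing to $\mathbb{F}[\sqrt{d}]$-rational cocycles from the outset and verifying that the equivalence relation of Lemma \ref{lem2_T}, read inside $\mathrm{diag}(n,\mathbb{F}[\sqrt{d}])$, induces the same set of cosets as the nominally larger quotient over $\overline{\mathbb{F}}$; the remaining manipulations are the routine bookkeeping already packaged in the preceding lemmas.
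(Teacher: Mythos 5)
Your proposal is correct and follows essentially the same route as the paper, whose entire proof is the one-line observation that the statement follows from Lemmas \ref{lem1_T} and \ref{lem2_T} (with Lemma \ref{equiv} and Proposition \ref{decRJD} supplying the normal form $X=RJD$). The additional points you verify --- that $C(r)\cdot Ker(T)$ is a subgroup of $T^{-1}(C(r))$ via the stability of $C(r)$ under $\sigma_{2}$ and under conjugation by $S$, and that every coset admits an $\mathbb{F}[\sqrt{d}]$-rational representative --- are exactly the bookkeeping the paper leaves implicit, and your handling of them is sound.
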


At this point, one needs the explicit description of the centralizer
and its preimage under $T$.

\begin{lem}\label{center_r}

Let  $r$ be a non-skewsymmetric $r$-matrix
associated to an admissible triple $(\Gamma_{1},\Gamma_{2},\tau)$.
Then the following hold:

(a) $C(r)$ consists of all diagonal matrices $D=\mathrm{diag}(d_{1},...,d_{n})$ such that $d_{i}=s_{i}s_{i+1}...s_{n}$, where $s_{i}\in \overline{\mathbb{F}}$ satisfy the condition: $s_{i}=s_{j}$ if  $\alpha_{i}\in \Gamma_{1}$ and $\tau(\alpha_{i})=\alpha_{j}$.

(b) $T^{-1}(C(r))$ consists of all diagonal matrices $D=\mathrm{diag}(d_{1},...,d_{n})$ such that $d_{i}=s_{i}s_{i+1}...s_{n}$, where $s_{i}\in \overline{\mathbb{F}}$ satisfy the condition: $\overline{s}_{i}s_{n-i}=\overline{s}_{j}s_{n-j}$ if  $\alpha_{i}\in \Gamma_{1}$ and $\tau(\alpha_{i})=\alpha_{j}$.

\end{lem}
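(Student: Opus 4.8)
The plan is to reduce everything to an explicit computation of how a diagonal matrix acts on the three summands of the Belavin--Drinfeld $r$-matrix, using the fact (Lemma 4.11 of \cite{SP1}, already invoked above) that $C(r)\subset \mathrm{diag}(n,\overline{\mathbb{F}})$. Thus it suffices to decide, for a diagonal $D=\mathrm{diag}(d_1,\dots,d_n)$, when $(\mathrm{Ad}_D\otimes \mathrm{Ad}_D)(r)=r$. Recall that in $sl(n)$ the root vector attached to $\alpha=\epsilon_i-\epsilon_j$ is the elementary matrix $E_{ij}$, and $\mathrm{Ad}_D(E_{ij})=(d_i/d_j)E_{ij}$; writing $\chi_\alpha(D):=d_i/d_j$ defines a multiplicative character, $\chi_{\alpha+\beta}=\chi_\alpha\chi_\beta$, and for the simple root $\alpha_i=\epsilon_i-\epsilon_{i+1}$ one has $\chi_{\alpha_i}(D)=d_i/d_{i+1}$.

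First I would push $\mathrm{Ad}_D\otimes\mathrm{Ad}_D$ through the formula for $r$. The Cartan part $r_0\in\mathfrak h\otimes\mathfrak h$ is fixed because $D$ commutes with $\mathfrak h$; each term $e_\alpha\otimes e_{-\alpha}$ of the standard part is fixed because it is scaled by $\chi_\alpha(D)\chi_{-\alpha}(D)=1$; and each twist term $e_\alpha\wedge e_{-\tau^k(\alpha)}$ is scaled by $\chi_\alpha(D)/\chi_{\tau^k(\alpha)}(D)$. Hence $D\in C(r)$ if and only if $\chi_\alpha(D)=\chi_{\tau^k(\alpha)}(D)$ for every $\alpha\in(\mathrm{Span}\,\Gamma_1)^{+}$ and every $k$. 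By multiplicativity of $\chi$ and the fact that $\tau$ is a bijection of simple roots $\Gamma_1\to\Gamma_2$, this whole family of equations collapses to the single requirement $\chi_{\alpha_i}(D)=\chi_{\alpha_j}(D)$ whenever $\alpha_i\in\Gamma_1$ and $\tau(\alpha_i)=\alpha_j$. Introducing the parametrization $d_i=s_is_{i+1}\cdots s_n$ (so the $s_i\in\overline{\mathbb F}$ are free and $s_n=d_n$ carries the overall scale), one computes $\chi_{\alpha_i}(D)=d_i/d_{i+1}=s_i$, and the requirement becomes exactly $s_i=s_j$. This is statement (a).

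For (b) I would compute $T(D)=SD^{-1}S\overline D$ directly. Since $S$ is the antidiagonal involution, $SD^{-1}S=\mathrm{diag}(d_n^{-1},d_{n-1}^{-1},\dots,d_1^{-1})$, so $T(D)=\mathrm{diag}(t_1,\dots,t_n)$ with $t_i=\overline{d_i}/d_{n+1-i}$. Now $D\in T^{-1}(C(r))$ means $T(D)\in C(r)$, so I apply part (a) to $T(D)$: its $i$-th ratio parameter is $\sigma_i=t_i/t_{i+1}=(\overline{d_i}/\overline{d_{i+1}})(d_{n-i}/d_{n+1-i})=\overline{s_i}\,s_{n-i}$, using $\overline{d_i}/\overline{d_{i+1}}=\overline{s_i}$ and $d_{n-i}/d_{n+1-i}=s_{n-i}$. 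By (a) applied to $T(D)$, membership holds iff $\sigma_i=\sigma_j$ whenever $\alpha_i\in\Gamma_1$ and $\tau(\alpha_i)=\alpha_j$, i.e.\ iff $\overline{s_i}\,s_{n-i}=\overline{s_j}\,s_{n-j}$, which is statement (b).

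The only genuinely delicate point is the reduction in the second paragraph: one must verify that the invariance equations indexed by all $\alpha\in(\mathrm{Span}\,\Gamma_1)^{+}$ and all $k\in\mathbb N$ are equivalent to the finitely many equations on simple roots. Here I would be careful to argue both directions — necessity is immediate since the simple roots of $\Gamma_1$ lie in $(\mathrm{Span}\,\Gamma_1)^{+}$, while sufficiency follows because any $\alpha\in\mathrm{Span}\,\Gamma_1$ is an integer combination of simple roots, $\chi$ is multiplicative, and $\tau$ respects this combination, so $\chi_{\alpha_i}(D)=\chi_{\tau(\alpha_i)}(D)$ on simple roots forces $\chi_\alpha(D)=\chi_{\tau(\alpha)}(D)$ and then, by iteration, $\chi_\alpha(D)=\chi_{\tau^k(\alpha)}(D)$. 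Everything else is bookkeeping with the parametrization $d_i=s_i\cdots s_n$ and the antidiagonal action of $S$.
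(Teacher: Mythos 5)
Your proposal is correct, and it follows essentially the same route as the paper: the paper simply cites Lemma~5.5 of \cite{SP1} for part (a) (which is exactly this diagonal-character computation, using $C(r)\subset\mathrm{diag}(n,\overline{\mathbb{F}})$ from Lemma~4.11 of \cite{SP1}) and notes that (b) ``follows immediately from (a),'' which is precisely your computation $t_i/t_{i+1}=\overline{s}_i s_{n-i}$ applied to $T(D)$. Your write-up merely makes explicit what the paper leaves to the reference, including the reduction from all $\alpha\in(\mathrm{Span}\,\Gamma_1)^{+}$ and all $k$ to the simple-root equations.
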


\begin{proof}
Part (a) can be proved in the same way as Lemma 5.5 from \cite{SP1} and (b) follows immediately from (a).
\end{proof}

Let us make the following remark. Any admissible triple $(\Gamma_{1},\Gamma_{2},\tau)$ can be viewed as a union of strings $\beta_{1}\xrightarrow {\tau}\beta_{2}\xrightarrow{\tau} ...\xrightarrow{\tau}\beta_{k}$, where
$\beta_{1}=\alpha_{i_{1}}\in \Gamma_{1}$,...., $\beta_{k-1}=\alpha_{i_{k-1}}\in \Gamma_{1}$ and $\beta_{k}=\alpha_{i_{k}}\notin \Gamma_{1}$. The above lemma implies that elements of $C(r)$ have the property that $s_{i_{1}}=s_{i_{2}}=...=s_{i_{k}}$, i.e. $s_{i}$ is constant on each string.
In turn, elements of $T^{-1}(C(r))$ satisfy $\overline{s}_{i_{1}}s_{n-i_{1}}=
\overline{s}_{i_{2}}s_{n-i_{2}}=...=\overline{s}_{i_{k}}s_{n-i_{k}}$, i.e. $\overline{s}_{i}s_{n-i}$ is constant on each string.

\begin{thm}\label{main}
Suppose $r$ is a non-skewsymmetric $r$-matrix with admissible triple $(\Gamma_{1},\Gamma_{2},\tau)$ satisfying $s\tau =\tau^{-1}s$. Let $str(\Gamma_{1},\Gamma_{2},\tau)$ denote the number of symmetric strings without middlepoint. Then

\[\overline{H}_{BD}^{1}(sl(n,\mathbb{F}),r)=
\left(\frac{\mathbb{F}^{*}}{N_{\mathbb{F}(\sqrt{d})/\mathbb{F}}(\mathbb{F}(\sqrt{d}))^{*}}\right)^{str(\Gamma_{1},\Gamma_{2},\tau)}.\]

\end{thm}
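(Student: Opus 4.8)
The plan is to start from the identification $\overline{H}^{1}_{BD}(sl(n,\mathbb{F}),r)=T^{-1}(C(r))/(C(r)\cdot Ker(T))$ furnished by the preceding proposition, and to reduce the computation of this quotient to a purely arithmetic statement about the norm map of $L:=\mathbb{F}(\sqrt d)$. Write $\mathcal{T}=\mathrm{diag}(n,L)$ for the diagonal torus and $\overline{\cdot}$ for the conjugation $\sigma_{2}$ of $L/\mathbb{F}$; by Lemmas \ref{lem1_T}--\ref{lem2_T} all representatives and all centralizer elements entering the quotient lie in $\mathcal{T}$. First I would record that $T(D)=SD^{-1}S\overline{D}$ is a \emph{group endomorphism} of the abelian group $\mathcal{T}$ (diagonal matrices commute and $S^{2}=1$), and check the inclusions $C(r)\subseteq T^{-1}(C(r))$ and $Ker(T)\subseteq T^{-1}(C(r))$; the first uses that $SD^{-1}S$ reverses the diagonal and that, because $s\tau=\tau^{-1}s$ and $s(\Gamma_{1})=\Gamma_{2}$, this reversal carries $C(r)$ into itself. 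Granting these, the isomorphism theorems give the cleaner description
\[\overline{H}^{1}_{BD}(sl(n,\mathbb{F}),r)\cong\frac{C(r)\cap\mathrm{Im}(T)}{T(C(r))},\]
since $T$ maps $T^{-1}(C(r))$ onto $C(r)\cap\mathrm{Im}(T)$ with kernel $Ker(T)$, and sends $C(r)\cdot Ker(T)$ onto $T(C(r))$.

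Next I would introduce the coordinates $s_{i}=d_{i}/d_{i+1}$ for $1\le i\le n-1$ and $s_{n}=d_{n}$, in which $\mathrm{Ad}_{D}$ scales $e_{\alpha_{i}}$ by $s_{i}$, so that by Lemma \ref{center_r} the group $C(r)$ is exactly the set of $D$ whose coordinate $s_{i}$ is constant along each $\tau$-string. A direct computation (using that $SD^{-1}S$ has coordinates $s_{n-i}$ and $\overline{D}$ has coordinates $\overline{s_{i}}$) shows $T$ acts by $(Ts)_{i}=\overline{s_{i}}\,s_{n-i}$ for $i\le n-1$. Since the scalar torus $Z=L^{*}I$ lies in $C(r)$ and $Z\cap\mathrm{Im}(T)=T(Z)=\{\lambda I:N(\lambda)=1\}$ by Hilbert~90, the scalar contributions to numerator and denominator coincide; I would therefore pass to the adjoint torus $\mathcal{T}/Z\cong(L^{*})^{n-1}$ with coordinates $s_{1},\dots,s_{n-1}$, which eliminates the auxiliary coordinate $s_{n}$ without changing the quotient. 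On $\mathcal{T}/Z$ the induced map $\overline{T}$ still acts by $(\overline{T}s)_{i}=\overline{s_{i}}\,s_{n-i}$, and the Dynkin involution $s:\alpha_{i}\mapsto\alpha_{n-i}$ permutes the $\tau$-strings; a string $\Sigma$ is symmetric when $s(\Sigma)=\Sigma$, and it carries a middlepoint precisely when it contains the $s$-fixed index $n/2$ (hence only when $n$ is even).

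The heart of the argument, and the step I expect to be the main obstacle, is the precise computation of $\mathrm{Im}(\overline{T})$, especially its behaviour at an index fixed by $i\mapsto n-i$. On a pair of distinct indices $\{i,n-i\}$ the assignment $(s_{i},s_{n-i})\mapsto(\overline{s_{i}}s_{n-i},\overline{s_{n-i}}s_{i})$ surjects onto $\{(t,\overline{t}):t\in L^{*}\}$, so it imposes only the reality constraint $t_{i}=\overline{t_{n-i}}$. At the self-paired index $i=n/2$, however, $(\overline{T}s)_{n/2}=\overline{s_{n/2}}\,s_{n/2}=N(s_{n/2})$, so this coordinate is forced into the norm subgroup $N(L^{*})$ rather than merely into $\mathbb{F}^{*}$. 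Recognising this norm phenomenon (and that it is exactly the Hilbert~90 condition) is what separates the two kinds of symmetric strings. I would then evaluate $C(r)\cap\mathrm{Im}(\overline{T})$ and $\overline{T}(C(r))$ string by string, using that on $C(r)$ the coordinate equals the constant string-value $c_{\Sigma}$ and that $\overline{T}$ sends it to $\overline{c_{\Sigma}}\,c_{s(\Sigma)}$.

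This bookkeeping yields three types of contribution. For an asymmetric pair $\{\Sigma,s(\Sigma)\}$ both $C(r)\cap\mathrm{Im}(\overline{T})$ and $\overline{T}(C(r))$ equal the full $\{(t,\overline{t})\}\cong L^{*}$, so the factor is trivial. For a symmetric string \emph{with} a middlepoint the intersection is already forced into $N(L^{*})$ by the computation above, while $\overline{T}(C(r))$ contributes $\overline{c_{\Sigma}}\,c_{\Sigma}=N(c_{\Sigma})\in N(L^{*})$, so that factor is again trivial. For a symmetric string \emph{without} a middlepoint the intersection allows any value in $\mathbb{F}^{*}$ (only the reality constraint applies), whereas $\overline{T}(C(r))$ still contributes $N(c_{\Sigma})\in N(L^{*})$, giving the factor $\mathbb{F}^{*}/N(L^{*})$. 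Taking the product over all strings produces
\[\overline{H}^{1}_{BD}(sl(n,\mathbb{F}),r)\cong\left(\frac{\mathbb{F}^{*}}{N_{\mathbb{F}(\sqrt d)/\mathbb{F}}(\mathbb{F}(\sqrt d)^{*})}\right)^{str(\Gamma_{1},\Gamma_{2},\tau)},\]
as claimed. The only genuinely delicate point is the norm phenomenon at the self-paired coordinate; everything else is the isomorphism theorems, Hilbert~90, and the combinatorics of how $s$ permutes the $\tau$-strings.
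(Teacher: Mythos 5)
Your proposal is correct, and while it shares the paper's skeleton, the way you evaluate the quotient is genuinely different. Both arguments start from $\overline{H}^{1}_{BD}(sl(n,\mathbb{F}),r)=T^{-1}(C(r))/(C(r)\cdot Ker(T))$, both pass to the coordinates $s_{i}=d_{i}/d_{i+1}$ (the paper does exactly this via the substitution $\varphi(s_{1},\dots,s_{n})=\mathrm{diag}(s_{1}\cdots s_{n},\dots,s_{n})$ and the conjugated map $\widetilde{T}$, under which $(\widetilde{T}s)_{i}=\overline{s}_{i}s_{n-i}$), and the decisive arithmetic fact is the same: along a symmetric string the invariant $t=\overline{s}_{i}s_{n-i}$ lies in $\mathbb{F}^{*}$, the class is trivial precisely when $t\in N_{\mathbb{F}(\sqrt{d})/\mathbb{F}}(\mathbb{F}(\sqrt{d})^{*})$, and a middlepoint forces $t=N(s_{n/2})$ --- your ``norm phenomenon at the self-paired index'' is exactly the paper's Steps 2--3. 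Where you diverge is in execution. The paper works upstairs and element-by-element: for each string type it exhibits an explicit factorization of a given element of $\widetilde{T}^{-1}\varphi^{-1}(C(r))$ as a product of an element of $Ker(\widetilde{T})$ and an element of $\varphi^{-1}(C(r))$, which costs separate constructions for $n$ odd and $n$ even (including the ad hoc middle-coordinate factor $r_{m}$ with $r_{m}\overline{r}_{m}=1$) and a separate contradiction argument in the non-norm case. You instead exploit that $T$ is an endomorphism of the abelian torus with $T(C(r))\subseteq C(r)$ --- an inclusion you correctly reduce to the fact that $i\mapsto n-i$ permutes the $\tau$-strings under $s(\Gamma_{1})=\Gamma_{2}$, $s\tau=\tau^{-1}s$ (you should also say, though it is immediate from Lemma \ref{center_r}(a), that $C(r)$ is stable under entrywise conjugation, since $T(D)=(SD^{-1}S)\overline{D}$ uses both); this inclusion is also what makes the paper's quotient well-formed, though the paper leaves it implicit. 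The homomorphism theorem then converts the quotient into $\bigl(C(r)\cap\mathrm{Im}(T)\bigr)/T(C(r))$, the scalar torus is absorbed via $Z\cap\mathrm{Im}(T)=T(Z)=\{\lambda I:\ N(\lambda)=1\}$ (this is the one step you assert rather than prove: the inclusion $Z\cap\mathrm{Im}(T)\subseteq\{N=1\}$ needs the small computation $\prod_{i\le n-1}N(s_{i})=1$, obtained by pairing $i$ with $n-i$, and the reduction to $\mathcal{T}/Z$ needs precisely $Z\cap\mathrm{Im}(T)\subseteq T(C(r))$, which this supplies), and the rest is a blockwise image computation that treats all parities of $n$ and all string types uniformly. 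What your route buys is exactly the elimination of the paper's explicit decompositions: the asymmetric-pair and middlepoint cases become one-line image calculations instead of hand-built factorizations, and the non-norm obstruction is visible structurally rather than by contradiction. Both proofs share the same glossed foundational point --- the conjugation $\sigma_{2}$ is an involution only on $\mathbb{F}[\sqrt{d}]$, not on $\overline{\mathbb{F}}$, so the quotient must really be computed in $\mathrm{diag}(n,\mathbb{F}[\sqrt{d}])$ --- and your explicit restriction to $\mathcal{T}=\mathrm{diag}(n,\mathbb{F}[\sqrt{d}])$ at the outset is, if anything, more careful than the source.
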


\begin{proof}
Let $\varphi:(\overline{\mathbb{F}}^{*})^{n}\rightarrow \mathrm{diag}(n,\overline{\mathbb{F}})$ be the map
\[\varphi(s_{1},...,s_{n-1},s_{n})=\mathrm{diag}(s_{1}...s_{n},s_{2}...s_{n},...,s_{n-1}s_{n},s_{n}).\] Consider $\widetilde{T}=\varphi^{-1}T\varphi$. Since $Ker(T)=\varphi Ker(\widetilde{T})$, we have

\[\frac{T^{-1}(C(r))}{Ker(T)\cdot C(r)}\cong \frac{\widetilde{T}^{-1}\varphi^{-1}(C(r))}{Ker(\widetilde{T})\cdot\varphi^{-1}(C(r))}.\] We make the following remarks:

(i) $(s_{1},...,s_{n})\in Ker(\widetilde{T})$ if and only if $\overline{s}_{i}s_{n-i}=1$ for all $i\leq n-1$ and $\overline{s}_{n}=s_{1}...s_{n}$.

(ii) $(s_{1},...,s_{n})\in\varphi^{-1}(C(r))$ is equivalent to $s_{i}$ is constant on each string of the given triple.

(iii) $(s_{1},...,s_{n})\in\widetilde{T}^{-1}\varphi^{-1}(C(r))$ implies that $\overline{s}_{i}s_{n-i}$ is constant on each string.

Step 1. Suppose that the admissible triple is the disjoint union of two symmetric strings $\alpha_{i_{1}}\xrightarrow {\tau}\alpha_{i_{2}}\xrightarrow{\tau} ...\xrightarrow{\tau}\alpha_{i_{k}}$ and $\alpha_{n-i_{k}}\xrightarrow {\tau}\alpha_{n-i_{k-1}}\xrightarrow{\tau} ...\xrightarrow{\tau}\alpha_{n-i_{1}}$. Here we recall that
$\tau$ has the property that $\tau(\alpha_{n-j})=\alpha_{n-i}$ if $\tau(\alpha_{i})=\alpha_{j}$.

Let $(s_{1},...,s_{n})\in\widetilde{T}^{-1}\varphi^{-1}(C(r))$. Then
$\overline{s}_{i_{1}}s_{n-i_{1}}=...=\overline{s}_{i_{k}}s_{n-i_{k}}=:t$
and $\overline{s}_{n-i_{1}}s_{i_{1}}=...=\overline{s}_{n-i_{k}}s_{i_{k}}=\overline{t}$. One can check that $(s_{1},...,s_{n})\in Ker(\widetilde{T})\cdot \varphi^{-1}(C(r))$. Indeed, let us assume first that $n=2m+1$. Then $(s_{1},...,s_{n})$ is the product of the following elements:
$(s_{1},...,s_{m},(\overline{s}_{m})^{-1},...,(\overline{s}_{1})^{-1},
\overline{s_{1}...s_{m}})$ and
$(1,...,1,s_{m+1}\overline{s}_{m},...,s_{n-1}\overline{s}_{1},s_{n}(\overline{s_{1}...s_{m}})^{-1})$. The first factor belongs to $Ker(\widetilde{T})$ and the second is in $\varphi^{-1}(C(r))$ since the
$n-i_{1}$, ..., $n-i_{k}$ coordinates have the constant value $t$.

Suppose that $n=2m$. Consider
$(s_{1},...,s_{m-1}, r_{m},(\overline{s}_{m+1})^{-1},...,(\overline{s}_{1})^{-1},
\overline{s}_{n})$, where $r_{m}=\frac{\overline{s_{1}...s_{m-1}}s_{n}}{s_{1}...s_{m-1}\overline{s}_{n}}$ and $(1,...,1,s_{m}/r_{m},s_{m+1}\overline{s}_{m-1},...,s_{n-1}\overline{s}_{1})$. The first factor is in  $Ker(\widetilde{T})$ since
$r_{m}\overline{r}_{m}=1$ and the second is in  $\varphi^{-1}(C(r))$ since
neither $n-i_{1}$, ..., $n-i_{k}$ can be $m$, and the corresponding coordinates
all equal $t$.

Step 2. Let us assume that the admissible triple consists of a symmetric string
 $\alpha_{i_{1}}\xrightarrow {\tau}\alpha_{i_{2}}\xrightarrow{\tau} ...\xrightarrow{\tau}\alpha_{i_{k}}\xrightarrow{\tau}...\xrightarrow {\tau}\alpha_{n-i_{k}}\xrightarrow {\tau}\alpha_{n-i_{k-1}}\xrightarrow{\tau} ...\xrightarrow{\tau}\alpha_{n-i_{1}}$ not containing
the middlepoint. Let $(s_{1},...,s_{n})\in\widetilde{T}^{-1}\varphi^{-1}(C(r))$. Then $\overline{s}_{i_{1}}s_{n-i_{1}}=...=\overline{s}_{i_{k}}s_{n-i_{k}}=\overline{s}_{n-i_{1}}s_{i_{1}}=...=\overline{s}_{n-i_{k}}s_{i_{k}}=t$.  We note that $t\in \mathbb{F}$
since $t=\overline{t}$.

Case 1. Assume there exists $q\in \mathbb{F}(\sqrt{d})$ such that $t=q\overline{q}$. Then $(s_{1},...,s_{n})\in Ker(\widetilde{T})\cdot\varphi^{-1}(C(r))$. Indeed,
one can make the same construction as in Step 1, except for the
positions $i_{1}$,..., $i_{k}$, $n-i_{1}$,..., $n-i_{k}$ where we consider instead
the decomposition \[(...,s_{i_{l}},...,s_{n-i_{l}},...)=
(...,\frac{s_{i_{l}}}{q},...,\frac{s_{n-i_{l}}}{q},...)\cdot (...,q,...,q,...). \]

Case 2. Assume for any $q\in \mathbb{F}(\sqrt{d})$, $t\neq q\overline{q}$. Then it follows that
$(s_{1},...,s_{n})\notin Ker(\widetilde{T})\cdot\varphi^{-1}(C(r))$. Indeed, let us assume the contrary, i.e. we may write $s_{i}=p_{i}r_{i}$, where
$\overline{p}_{i}p_{n-i}=1$ for all $i\leq n-1$, $\overline{p}_{n}=p_{1}...p_{n}$ and $r_{i_{1}}=...=r_{i_{k}}=r_{n-i_{1}}=...=r_{n-i_{k}}$. It follows that
 $t=\overline{s}_{i_{1}}s_{n-i_{1}}=\overline{r}_{i_{1}}r_{n-i_{1}}=\overline{r}_{i_{1}}r_{i_{1}}$, which is a contradiction.

Step 3. Let us suppose that the admissible triple consists of
a symmetric string
 $\alpha_{i_{1}}\xrightarrow {\tau}\alpha_{i_{2}}\xrightarrow{\tau} ...\xrightarrow{\tau}\alpha_{i_{k}}\xrightarrow{\tau}...\xrightarrow {\tau}\alpha_{n-i_{k}}\xrightarrow {\tau}\alpha_{n-i_{k-1}}\xrightarrow{\tau} ...\xrightarrow{\tau}\alpha_{n-i_{1}}$ containing  the middlepoint. In this case, $\overline{s}_{i_{1}}s_{n-i_{1}}=...=\overline{s}_{i_{k}}s_{n-i_{k}}=\overline{s}_{n-i_{1}}s_{i_{1}}=...=\overline{s}_{n-i_{k}}s_{i_{k}}=t$. Moreover,
$t=s_{m}\overline{s}_{m}$, where $s_{m}$ is the coordinate corresponding to
the middlepoint $\alpha_{m}$. Then again $(s_{1},...,s_{n})\in {Ker(\widetilde{T})\cdot\varphi^{-1}(C(r))}$ since we may proceed as in Step 2, case 1 by taking
$q=s_{m}$.
\end{proof}

\begin{ex}
For $\mathbb{F}=\mathbb{R}$ and $d=-1$, it follows that
given an $r$-matrix $r$ with admissible triple
$(\Gamma_{1},\Gamma_{2},\tau)$, $\overline{H}_{BD}^{1}(sl(n,\mathbb{R}),r)=(\mathbb{Z}_{2})^{str(\Gamma_{1},\Gamma_{2},\tau)}$.
\end{ex}

\begin{ex}

Let $\mathfrak{g}$ be a simple complex finite-dimensional Lie algebra. According to results of Etingof and Kazhdan (see \cite{EK1, EK2}), there exists an equivalence between the category $HA_{0}(\mathbb{C}[[\hbar]])$
of topologically free Hopf algebras cocommutative modulo $\hbar$ and the category $LBA_{0}(\mathbb{C}[[\hbar]])$ of topologically free over $\mathbb{C}[[\hbar]]$ Lie bialgebras with $\delta\equiv 0 (\mathrm{mod}\hbar)$. Due to this equivalence, the classification of quantum groups whose quasi-classical limit is $\mathfrak{g}$ is equivalent to the classification of Lie bialgebra structures on $\mathfrak{g}\otimes_{\mathbb{C}}\mathbb{C}[[\hbar]]$. This in turn reduces to the problem of finding Lie bialgebras on $\mathfrak{g}\otimes_{\mathbb{C}}\mathbb{C}((\hbar))$.

In \cite{SP1, SP2}  the classification of quantum groups with quasi-classical limit $\mathfrak{g}$ was discussed and a theory of Belavin--Drinfeld cohomology associated to any non-skewsymmetric $r$-matrix was introduced.

Let us consider $\mathbb{F}=\mathbb{C}((\hbar))$ and $d=\hbar$. Then
$N(\mathbb{F}(\sqrt{d}))=\mathbb{F}$ and Theorem \ref{main} implies that
$\overline{H}_{BD}^{1}(sl(n,\mathbb{C}((\hbar))),r)$ is trivial (consists of one element)
for any $r$-matrix $r$ satisfying the condition of Proposition \ref{cond_tau} and empty otherwise.
We have thus generalized our previous results \cite{SP2}, where
we proved that twisted cohomologies for $sl(n)$ associated to
generalized Cremmer--Gervais $r$-matrices are trivial.

\end{ex}
This result completes classification of quantum groups which have $sl(n,\mathbb{C})$ as the classical limit.
Summarizing, we have the following picture:

1. According to \cite{EK1, EK2}, there exists an equivalence between the category $HA_{0}(\mathbb{C}[[\hbar]])$
of topologically free Hopf algebras cocommutative modulo $\hbar$ and the category $LBA_{0}(\mathbb{C}[[\hbar]])$
of topologically free over $\mathbb{C}[[\hbar]]$ Lie bialgebras with $\delta\equiv 0 (\mathrm{mod}\hbar)$.

2. To describe the category  $LBA_{0}(\mathbb{C}[[\hbar]])$, it is sufficient (multiplying by a proper
power of $\hbar^N$) to classify Lie bialgebra structures
on the Lie algebra $\mathfrak{g}\otimes_{\mathbb{C}}\mathbb{C}((\hbar))$.

3. Following \cite{MSZ}, only three classical Drinfeld doubles are possible, namely
$$
D(\mathfrak{g}\otimes_{\mathbb{C}}\mathbb{C}((\hbar)))=\mathfrak{g}\otimes_{\mathbb{C}} A_k ,\ k=1,2,3.
$$
Here $A_1=\mathbb{K}[\varepsilon], \ \varepsilon^2=0$;
$A_2=\mathbb{K}\oplus\mathbb{K}$; $A_3=\mathbb{K}(\sqrt{\hbar})$
with $\mathbb{K}=\mathbb{C}((\hbar))$.

4. Lie bialgebra structures related to the case $A_1$ are in a one-to-one correspondence
with quasi-Frobenius subalgebras of $\mathfrak{g}\otimes_{\mathbb{C}}\mathbb{C}((\hbar))$.

5. Now we turn to the case $D(\mathfrak{g}\otimes_{\mathbb{C}}\mathbb{C}((\hbar)))=\mathfrak{g}\otimes_{\mathbb{C}} A_2$
with $\mathfrak{g}=sl(n)$. Up to multiplication by $\hbar^N$ and conjugation by an element of $GL(n,\mathbb{K})$, the related Lie bialgebra
structures are defined by the Belavin-Drinfeld data (see \cite{BD} and Section 2, the main ingredient is the
triple $\tau:\Gamma_1\to\Gamma_2$) and an additional data called a
Belavin-Drinfeld cohomology. In the case $\mathfrak{g}=sl(n)$, the cohomology consists of {\it one element}
independently of the Belavin-Drinfeld data. As a representative of this cohomology class one can choose the
{\it identity matrix}.

6. Finally, in the case $A_3$ and  $\mathfrak{g}=sl(n)$ the description is as follows.
Up to multiplication by $\hbar^N$ and conjugation by an element of $GL(n,\mathbb{K})$, the related Lie bialgebra
structures are defined by the Belavin-Drinfeld data and an additional data called a {\it twisted}
Belavin-Drinfeld cohomology. In this case the twisted cohomology consists of {\it one element}
if $\tau:\Gamma_1\to\Gamma_2$ satisfies the condition of Proposition \ref{cond_tau} and is {\it empty} otherwise
(no Lie bialgebra structures of the type $A_3$ if $\tau$ does not satisfy the condition of Proposition \ref{cond_tau}).
If the cohomology class is non-empty, it can be represented by the matrix $J$ introduced before Theorem \ref{case DJ}.

\section{Relations between  $\overline{H}_{BD}^{1}(sl(n,\mathbb{F}),r)$ and $\mathrm{Br} (\mathbb{F})$}

In this section we return to notation $\overline{H}_{BD}^{1}(sl(n,\mathbb{F}),r,d)$, see \ref{rem}.
According to Proposition \ref{co},
there exists a one-to-one correspondence between the twisted cohomology set
$\overline{H}_{BD}^{1}(sl(n,\mathbb{F}),r,d)$ and
gauge equivalence classes of Lie bialgebra structures on $sl(n,\mathbb{F})$
with the classical double isomorphic to $sl(n,\mathbb{F}[\sqrt{d}])$.

\begin{defn}    Let us define $\overline{H}_{BD-total}^{1} (sl(n,\mathbb{F}),r)$ as the union  of the sets
$\overline{H}_{BD}^{1}(sl(n,\mathbb{F}),r,d)$, where $d$ runs over $\{\mathbb{F}^{*}/(\mathbb{F}^{*})^2\} {\backslash}\{1\}$.
\end{defn}

Let $r_{CG}\in sl(3,\mathbb{F})^{\otimes 2}$ be the $r$-matrix defined by the triple
$$
\Gamma_1 =\{\alpha\},\ \Gamma_2 =\{\beta\},\ \tau (\alpha)=\beta
$$
If $\mathbb{F}=\mathbb{C}$, it is called the Cremmer--Gervais $r$-matrix. As a corollary to Theorem
\ref{main}, we obtain the following
\begin{thm}\label{br}
$$\overline{H}_{BD-total}^{1} (sl(3,\mathbb{F}),r_{CG})=$$
$$\{ (d,b): d\in\{\mathbb{F}^{*}/(\mathbb{F}^{*})^2\} \setminus \{1\}, b\in\frac{\mathbb{F}^{*}}{N_{\mathbb{F}(\sqrt{d})/\mathbb{F}}\ (\mathbb{F}(\sqrt{d}))^{*}}\}.$$

\end{thm}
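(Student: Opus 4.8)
The plan is to obtain Theorem \ref{br} as a direct corollary of Theorem \ref{main}. By definition, $\overline{H}_{BD-total}^{1}(sl(3,\mathbb{F}),r_{CG})$ is the union of the sets $\overline{H}_{BD}^{1}(sl(3,\mathbb{F}),r_{CG},d)$ as $d$ ranges over the nontrivial square classes $\{\mathbb{F}^{*}/(\mathbb{F}^{*})^{2}\}\setminus\{1\}$. So it suffices to compute each individual twisted cohomology set via Theorem \ref{main} and then record the union as a set of pairs.

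First I would verify that the Cremmer--Gervais triple $\Gamma_{1}=\{\alpha_{1}\}$, $\Gamma_{2}=\{\alpha_{2}\}$, $\tau(\alpha_{1})=\alpha_{2}$ satisfies the hypotheses of Theorem \ref{main}. Since $n=3$, the Dynkin diagram automorphism acts by $s(\alpha_{1})=\alpha_{2}$ and $s(\alpha_{2})=\alpha_{1}$, so $s(\Gamma_{1})=\Gamma_{2}$ is immediate, and one checks $s\tau(\alpha_{1})=s(\alpha_{2})=\alpha_{1}=\tau^{-1}(\alpha_{2})=\tau^{-1}s(\alpha_{1})$, hence $s\tau=\tau^{-1}s$. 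Thus both Proposition \ref{cond_tau} and Theorem \ref{main} apply.

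Next I would count the symmetric strings without middlepoint. The triple consists of the single string $\alpha_{1}\xrightarrow{\tau}\alpha_{2}$: there is no continuation, since $\alpha_{2}\notin\Gamma_{1}$, and there is no second string, since both simple roots are already used. This string is symmetric under $s$, because $s$ permutes its node set $\{\alpha_{1},\alpha_{2}\}$ while reversing the $\tau$-arrow; being a single self-symmetric string, it cannot be a pair of the Step~1 type in the proof of Theorem \ref{main}. It has no middlepoint, since a middlepoint would be a root fixed by $s$, which requires $n$ even, whereas here $n=3$ is odd. Therefore $str(\Gamma_{1},\Gamma_{2},\tau)=1$, and Theorem \ref{main} yields, for each nonsquare $d$,
\[
\overline{H}_{BD}^{1}(sl(3,\mathbb{F}),r_{CG},d)=\frac{\mathbb{F}^{*}}{N_{\mathbb{F}(\sqrt{d})/\mathbb{F}}(\mathbb{F}(\sqrt{d}))^{*}}.
\]

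Finally, assembling the union over all $d\in\{\mathbb{F}^{*}/(\mathbb{F}^{*})^{2}\}\setminus\{1\}$ and tagging each class with the square class $d$ from which it arises produces exactly the parametrized set in the statement. I do not expect a serious obstacle here: once the string combinatorics is confirmed (the only genuine point being to recognize a single symmetric string with no middlepoint, giving exponent $1$), the result is an immediate specialization of Theorem \ref{main} combined with the definition of $\overline{H}_{BD-total}^{1}$. The one mildly delicate item is the bookkeeping that distinct square classes $d$ index disjoint pieces, so that the total set is naturally described by pairs $(d,b)$ rather than by an unstructured union.
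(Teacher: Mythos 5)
Your proposal is correct and follows exactly the route the paper takes: the paper states Theorem \ref{br} as an immediate corollary of Theorem \ref{main}, relying on the fact that the Cremmer--Gervais triple for $sl(3)$ satisfies $s(\Gamma_{1})=\Gamma_{2}$, $s\tau=\tau^{-1}s$ and consists of a single symmetric string without middlepoint, so that $str(\Gamma_{1},\Gamma_{2},\tau)=1$, and then taking the union over nontrivial square classes $d$ as in the definition of $\overline{H}_{BD-total}^{1}$. Your verification of the string combinatorics (symmetry under $s$, absence of a middlepoint since $n=3$ is odd) merely makes explicit the details the paper leaves implicit, so there is nothing to correct.
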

Let $\mathrm{Br}(\mathbb{F})$ be the Brauer group of the field $\mathbb{F}$,
which is an abelian group whose elements are Morita equivalence classes of central simple algebras of finite rank over $\mathbb{F}$.
As a consequence of Theorem \ref{br},
we obtain the following
\begin{cor}
There is a natural map of sets
$$
P: \overline{H}_{BD-total}^{1} (sl(3,\mathbb{F}),r_{CG})\to\mathrm{Br}(\mathbb{F})
$$
\end{cor}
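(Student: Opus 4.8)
The plan is to define $P$ by sending each pair $(d,b)$ in $\overline{H}_{BD-total}^{1}(sl(3,\mathbb{F}),r_{CG})$, described explicitly in Theorem \ref{br}, to the Brauer class of the quaternion algebra $\left(\frac{d,b}{\mathbb{F}}\right)$. Recall that for a nonsquare $d\in\mathbb{F}^{*}$ and an element $b\in\mathbb{F}^{*}$ this is the $4$-dimensional central simple $\mathbb{F}$-algebra with basis $1,i,j,ij$ subject to $i^{2}=d$, $j^{2}=b$, $ji=-ij$. Setting $P(d,b)=\left[\left(\frac{d,b}{\mathbb{F}}\right)\right]\in\mathrm{Br}(\mathbb{F})$ gives a candidate map, and the content of the proof is to check that it is well defined on the two quotients appearing in the domain.

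First I would verify independence of the representative of $d$ modulo squares. If $d'=dc^{2}$ with $c\in\mathbb{F}^{*}$, then $\mathbb{F}(\sqrt{d'})=\mathbb{F}(\sqrt{d})$, and the assignment $i\mapsto ci$, $j\mapsto j$ furnishes an isomorphism $\left(\frac{d',b}{\mathbb{F}}\right)\cong\left(\frac{d,b}{\mathbb{F}}\right)$, so the Brauer class is unchanged. Next I would verify independence of the representative of $b$ modulo norms. The essential input here is the bilinearity of the quaternion symbol, namely that $\left(\frac{d,b_{1}b_{2}}{\mathbb{F}}\right)$ is Brauer equivalent to $\left(\frac{d,b_{1}}{\mathbb{F}}\right)\otimes_{\mathbb{F}}\left(\frac{d,b_{2}}{\mathbb{F}}\right)$, together with the classical splitting criterion: $\left(\frac{d,b}{\mathbb{F}}\right)$ is split (trivial in $\mathrm{Br}(\mathbb{F})$) if and only if $b\in N_{\mathbb{F}(\sqrt{d})/\mathbb{F}}(\mathbb{F}(\sqrt{d})^{*})$. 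Combining these, if $b'=b\cdot N_{\mathbb{F}(\sqrt{d})/\mathbb{F}}(z)$ for some $z\in\mathbb{F}(\sqrt{d})^{*}$, then in $\mathrm{Br}(\mathbb{F})$ one has $P(d,b')=P(d,b)\cdot\left[\left(\frac{d,\,N_{\mathbb{F}(\sqrt{d})/\mathbb{F}}(z)}{\mathbb{F}}\right)\right]=P(d,b)$. Thus $P$ descends to the quotient $\mathbb{F}^{*}/N_{\mathbb{F}(\sqrt{d})/\mathbb{F}}(\mathbb{F}(\sqrt{d})^{*})$ and is well defined as a map of sets.

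I do not expect a serious obstacle: the argument rests entirely on classical facts from the theory of central simple algebras (bilinearity of the symbol and the norm splitting criterion for quaternion algebras). The only point requiring care is that the domain is parametrized not by a single $d$ but by all nonsquare classes simultaneously, so the two well-definedness checks must be carried out in the correct order, first fixing a square class of $d$ and then quotienting by the $d$-dependent norm group. One may also observe that the image in fact lands in the $2$-torsion subgroup $\mathrm{Br}(\mathbb{F})[2]$, since every quaternion algebra has period dividing $2$; this is consistent with, and sharper than, the stated target $\mathrm{Br}(\mathbb{F})$.
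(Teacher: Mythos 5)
Your proposal is correct and takes essentially the same route as the paper: both define $P$ by sending $(d,b)$ to the Brauer class of the quaternion algebra $\left(\frac{d,b}{\mathbb{F}}\right)$ with $\mathbf{x}^{2}=d$, $\mathbf{y}^{2}=b$, $\mathbf{xy}=-\mathbf{yx}$, and both observe that the image lies in the $2$-torsion subgroup ${}_2\mathrm{Br}(\mathbb{F})$. Your explicit well-definedness checks (independence of $d$ modulo squares, and of $b$ modulo norms via the splitting criterion $b\in N_{\mathbb{F}(\sqrt{d})/\mathbb{F}}(\mathbb{F}(\sqrt{d})^{*})$) are a welcome elaboration of what the paper leaves implicit and only touches on in the remark following its proof.
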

\begin{proof}
Following \cite{M2} or \cite{M3}, let us define a {\it quaternion algebra},
which has as a vector space over $\mathbb{F}$ the following basis: $\{\mathbf{1}, \mathbf{x},\mathbf{y},\mathbf{xy}\}$
subject to relations
$$
\mathbf{x}^2 =d\cdot\mathbf{1};\ \mathbf{y}^2 =b\cdot\mathbf{1};\ \mathbf{xy} =-\mathbf{yx}.
$$
This algebra is simple and thus, represents an element of $\mathrm{Br}(\mathbb{F})$, see for instance
\cite{M2} or \cite{M3} for a proof. In fact, it is easy to see that $P$ maps $\overline{H}_{BD-total}^{1} (sl(3,\mathbb{F}),r_{CG})$
into ${}_2\mathrm{Br}(\mathbb{F})$, the subgroup of elements of order $2$.
\end{proof}
\vskip0.2cm
\begin{rem}
The constructed algebra represents a non-trivial element of $\mathrm{Br}(\mathbb{F})$
if $b\notin {N_{\mathbb{F}(\sqrt{d})/\mathbb{F}}\ (\mathbb{F}(\sqrt{d}))^{*}}$ because in this case the Hilbert
symbol $(d,b)=-1$. If $b\in {N_{\mathbb{F}(\sqrt{d})/\mathbb{F}}\ (\mathbb{F}(\sqrt{d}))^{*}}$, the constructed
algebra is isomorphic to $\mathrm{Mat}(2,\mathbb{F})$ and thus, represents the unit of the Brauer group,
see \cite{M2}, \cite{M3}.
\end{rem}
\begin{rem}
Results of \cite{M1} imply that the set
$P(\overline{H}_{BD-total}^{1} (sl(3,\mathbb{F}),r_{CG}))$ generates ${}_2 \mathrm{Br}(\mathbb{F})$.
However, the map $P$ is not injective.
In \cite{M3} it was proved that $P(d,b)=P(m,n)$ if and only if quadratic forms $z^2 - dx^2 -by^2$
and   $z^2 - mx^2 -ny^2$ are equivalent over $\mathbb{F}$. It would be interesting to find a relation
between the corresponding Lie bialgebra structures on $sl(3,\mathbb{F})$.
\end{rem}

\begin{rem}
Similarly, for any $r_{BD}$ whose triple $(\Gamma_1,\Gamma_2,\tau)$
has only one string (without middlepoint) and satisfies
the condition of Proposition \ref{cond_tau}, one can construct a map
$$P:\overline{H}_{BD-total}^{1} (sl(n,\mathbb{F}),r_{BD})\longrightarrow
{}_2 \mathrm{Br}(\mathbb{F}).$$
\end{rem}

\textbf{Acknowledgment.} The authors are grateful to J. Brzezinski
for valuable discussions.

\end{document}